\documentclass[11pt]{amsart}

\usepackage[utf8]{inputenc} 
\usepackage[a4paper]{geometry} 
\usepackage{enumerate}  
\usepackage{amsmath} 
\usepackage{amssymb}
\usepackage{float}
\usepackage{color}
\usepackage{multirow}
\usepackage{hyperref}  
\usepackage{verbatim}
                 \hypersetup{ pdfborder={0 0 0}, 
                              colorlinks=true,
                              linktoc=page,
                              pdfauthor={Mauro Fortuna, Michael Hoff and Giacomo Mezzedimi}, 
                              pdftitle={Unirational moduli spaces of elliptic K3 surfaces}} 
\definecolor{Gray}{gray}{0.9}                            
\usepackage[arrow,curve,matrix,tips,frame]{xy}

\theoremstyle{plain} 
\newtheorem{proposition}{Proposition}[section] 
\newtheorem{theorem}[proposition]{Theorem} 
\newtheorem{lemma}[proposition]{Lemma}

\theoremstyle{definition}

\newtheorem{example}[proposition]{Example} 
\theoremstyle{remark} 
\newtheorem{remark}[proposition]{Remark}

\renewcommand{\O}{{\mathcal{O}}}   
\newcommand{\cO}{{\mathcal{O}}}

\newcommand{\CC}{{\mathbb{C}}}      
\newcommand{\PP}{{\mathbb{P}}}

\newcommand{\cM}{{\mathcal{M}}}

\providecommand{\rk}{\mathop{\rm rk}}

\numberwithin{equation}{section}

\title[Unirational moduli spaces of some elliptic K3 surfaces]{Unirational moduli spaces of some elliptic K3 surfaces}

\author[M. Fortuna]{Mauro Fortuna}
\address{Institut für Algebraische Geometrie, Leibniz Universität Hannover, Welfengarten 1, 30167 Hannover, Germany.}
\email{\href{fortuna@math.uni-hannover.de}{fortuna@math.uni-hannover.de}}

\author[M. Hoff]{Michael Hoff} 
\address{Universit\"at des Saarlandes, Campus E2 4, D-66123 Saarbr\"ucken, Germany}
\email{\href{mailto:hahn@math.uni-sb.de}{hahn@math.uni-sb.de}} 

\author[G. Mezzedimi]{Giacomo Mezzedimi}
\address{Institut für Algebraische Geometrie, Leibniz Universität Hannover, Welfengarten 1, 30167 Hannover, Germany.}
\email{\href{mezzedimi@math.uni-hannover.de}{mezzedimi@math.uni-hannover.de}}

\address{\textit{Current address:} Mathematisches Institut \\ Universität Bonn \\ Endenicher Allee 60 \\ 53115 Bonn \\ Germany}
\email{\href{mezzedim@math.uni-bonn.de}{mezzedim@math.uni-bonn.de}}

\date{\today} 

\keywords{Moduli spaces, K3 surfaces, unirationality} 
\subjclass[2020]{14J15, 14J28, 14J27, 14M20 (Primary), 32M15 (Secondary)}

\begin{document}
\begin{abstract} We show that the moduli space of $U\oplus \langle -2k \rangle$-polarized K3 surfaces is unirational for $k \le 50$ and $k \notin \{11,35,42,48\}$, and for other several values of $k$ up to $k=97$. Our proof is based on a systematic study of the projective models of elliptic K3 surfaces in $\PP^n$ for $3\le n \le 5$ containing either the union of two smooth rational curves or the union of a smooth rational curve and an elliptic curve intersecting at one point.
\end{abstract}

\maketitle

\section{Introduction}

By classical results and works of Mukai \cite{Muk88, Mukg11, Mukg13, Mukg16, Mukg1820}, it is known that the moduli spaces of complex K3 surfaces of genus $g \le 12$ and $g=13, 16, 18, 20$ are unirational. This was later improved by Farkas and Verra \cite{FV18, FV19} extending the unirationality result to K3 surfaces of genus $g=14, 22$ by using the connection to special cubic fourfolds. Recently, the moduli spaces of $n$-pointed K3 surfaces of genus $g\le 22$ were studied systematically in \cite{Ma19}. It is then natural to ask the more general question about the unirationality of moduli spaces of lattice polarized K3 surfaces. Farkas and Verra \cite{FV12,FV16, V15} worked out the case of polarized Nikulin surfaces. The case of $2$-elementary K3 surfaces was studied in \cite{Ma15}, and further results in this direction were obtained in \cite{BHK16} by using orbits of representation of algebraic groups. In the present article, we will restrict to the case of elliptic K3 surfaces of Picard rank at least $3$.

We recall that a K3 surface $S$ is called elliptic if it admits a fibration $S\to \PP^1$ in curves of genus one together with a section. The geometry of elliptic surfaces can be studied via their realization as Weierstrass fibrations. By using this description in \cite{Mir81}, Miranda constructed the moduli space of elliptic K3 surfaces and showed its unirationality (it is actually rational by \cite{Lej93}). The N\'eron-Severi group of the very general elliptic K3 surface is isomorphic to the hyperbolic plane $U$, and it is generated by the classes of the fiber and the zero section of the elliptic fibration. 

For an elliptic K3 surface of Picard rank $3$, its Picard lattice is isomorphic to $U\oplus \langle -2k \rangle$ for some integer $k\ge 1$. We are interested in studying the moduli spaces $\mathcal{M}_{2k}$ of $U\oplus \langle -2k \rangle$-polarized K3 surfaces. These moduli spaces are divisors in the moduli space of elliptic K3 surfaces. 

The study of $\mathcal{M}_{2k}$ was initiated in \cite{FM20}, where the authors showed the unirationality of $\mathcal{M}_{2k}$ for some values of $k\le 64$. Moreover, in the same article it is proven that the Kodaira dimension of $\cM_{2k}$ is non-negative for $k \ge 176$ (and for other smaller values until $k=140$). We extend the unirationality result using the computer algebra system \emph{Macaulay2}. 
We construct projective models of $U\oplus \langle -2k \rangle$-polarized K3 surfaces in $\PP^n$ for $3 \le n \le 5$ for several values of $k$, leading to the following theorem.

\begin{theorem} \label{thm:principal}
The moduli space $\mathcal{M}_{2k}$ is unirational for the following values of $k$: 
\begin{align*}
\{ &  {6}, {7}, {9},   {10}, 12,\dots, 34, 36, \dots, 41, 43, \dots, 47, {49},   {50},\\
 &       52,    {53}, 54,  {59}, 60, 61, 62,  {64},  68,   {69},   {73},   {79},   {81},   {94},   {97} \}.
\end{align*}
\end{theorem}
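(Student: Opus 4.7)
The plan is, for each value of $k$ in the list, to exhibit a unirational (indeed rational) parameter space that dominates $\mathcal{M}_{2k}$ via explicit projective models. Fixing generators $e,f,\ell$ for $U \oplus \langle -2k\rangle$ with $e\cdot f = 1$, $e^2 = f^2 = 0$ and $\ell^2 = -2k$, I would search for triples $(H, C_1, C_2)$ of lattice classes where $H$ is an ample class with $H^2 = 2n-2$ for some $n \in \{3,4,5\}$, and $C_1, C_2$ are either both classes with self-intersection $-2$ (rational curves) or one of self-intersection $-2$ and one of self-intersection $0$ (a rational and an elliptic curve) meeting with $C_1 \cdot C_2 = 1$. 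The image $\varphi_H(S) \subset \PP^n$ must then contain the union $C_1 \cup C_2$ of the prescribed numerical type.

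Given such a triple, the parametrization proceeds as follows. A reducible curve $C = C_1 \cup C_2$ of the prescribed multidegree in $\PP^n$ lies in a unirational (usually rational) component of the Hilbert scheme: smooth rational curves of fixed degree are parametrized by tuples of forms on $\PP^1$, elliptic curves of small degree have a unirational parameter space through their normal forms, and the gluing at one point adds only finitely many parameters. Having fixed $C$, the K3 surfaces in $\PP^n$ containing $C$ and cut out by the expected equations (cubic hypersurfaces in $\PP^3$, complete intersections of a quadric and a cubic in $\PP^4$, or of three quadrics in $\PP^5$) form a linear system, which is a projective space, hence rational. Iterating over choices of $C$ yields a rational parameter variety $\mathcal{P}_k$ together with a rational map to $\mathcal{M}_{2k}$.

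To conclude unirationality it remains to check that $\mathcal{P}_k \dashrightarrow \mathcal{M}_{2k}$ is dominant. Since $\dim \mathcal{M}_{2k} = 18$, this reduces to a dimension count together with the verification that the Picard lattice of the general member equals $U \oplus \langle -2k\rangle$ and is not strictly larger. For this step I would use \emph{Macaulay2} to: (i) construct an explicit example over $\QQ$ or a finite field with the chosen numerical data; (ii) check that the scheme produced is a smooth K3 surface of the expected invariants; (iii) verify that $H, C_1, C_2$ yield the intersection matrix of $U \oplus \langle -2k\rangle$; and, most importantly, (iv) produce a specific fibre of Picard rank exactly $3$, so that upper semicontinuity of the Picard rank in families forces the generic member to be $U \oplus \langle -2k\rangle$-polarized rather than to carry an accidentally larger lattice.

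The main obstacle is the case-by-case nature of the construction. For each $k$ a suitable choice of $(n, H, C_1, C_2)$ and of the ambient degree of the model must be found by hand, and the containing linear system must then be shown, by direct computation, to have a smooth general element of the correct Picard type. The \emph{Macaulay2} verification that no accidental $(-2)$-curves enrich the Picard lattice is the technical crux and presumably accounts both for the missing values of $k$ in the range below $50$ and for the increasingly sporadic successes beyond.
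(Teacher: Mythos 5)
Your outline reproduces the paper's main mechanism for most values of $k$ (curve configurations in $\PP^3$--$\PP^5$, an incidence variety fibered in linear systems over a unirational space of curve pairs, and a \emph{Macaulay2} dimension count), but as written it cannot prove the stated theorem, for two reasons. First, a coverage gap: smooth complete-intersection models containing two smooth curves do not reach all listed values. In Case 2 one has $k=d^2+d\gamma-n+1\ge 8$, and the exhaustive lattice search shows no Case 1 configuration for $k=6,7$ either; the paper obtains $k\in\{6,7,14,23,45,47,60,62\}$ only from the quasi-polarized construction of Section \ref{nodalK3} (K3 surfaces with one $A_1$-node containing an elliptic curve, Table \ref{table:nodal}), which your proposal omits entirely. (Also two slips: the models in $\PP^3$ are quartics, not cubics, and $\dim\cM_{2k}=17$; the number $18$ is $\dim\cM_{2k}+\dim|E|+\dim|\Gamma|$.)

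Second, your dominance step is misdirected. Certifying that one explicit fibre has Picard rank exactly $3$ is not feasible as described (over a finite field the geometric Picard number of a K3 is even, so a single reduction can never certify rank $3$, and a two-prime van Luijk-type computation for dozens of models is a different and much heavier project), it is not sufficient (rank $3$ of one member does not make the image $17$-dimensional), and it does not address what must actually be checked, namely that the embedding of the rank-$3$ lattice into $\mathrm{NS}(X)$ is \emph{primitive} — a finite-index overlattice is not excluded by your checks (iii)--(iv), yet without primitivity the surfaces do not define points of $\cM_{2k}$ at all. The paper settles primitivity by short lattice-geometric arguments (Section \ref{prim}) and gets dominance essentially for free: by Saint-Donat's criterion the very-ampleness conditions are open, so the image of the incidence variety is open in the irreducible $\cM_{2k}$ and one smooth example suffices (Lemma \ref{lemma:nonempty}). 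The dimension count's true role (Proposition \ref{prop:all}) is to prove unirationality of the incidence variety $I$: it forces the fibre of $\pi_2$ at the computed example to have the generic dimension, so that $I$ is generically a linear-system/Grassmannian bundle over the unirational space of curve pairs. Your proposal takes the rationality of the total parameter space for granted; without this check the explicit example could lie over a stratum where $h^0(\mathcal{I}_{E\cup\Gamma}(j))$ jumps, i.e.\ in a component of $I$ that is not such a bundle and need not be unirational.
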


Combining this with the previous work in \cite{FM20}, that constructed the moduli spaces $\cM_{2k}$ using double covers and Weierstrass fibrations, we obtain a more complete result:

\begin{theorem} 
The moduli space $\mathcal{M}_{2k}$ is unirational for $k\le 50$, $k \notin \{11,35,42,48\}$ and for the following values of $k$: 
\begin{align*}
\{52,    {53}, 54,  {59}, 60, 61, 62,  {64},  68,   {69},   {73},   {79},   {81},   {94},   {97} \}.
\end{align*}
\end{theorem}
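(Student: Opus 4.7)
The plan is straightforward: this statement is a consolidation, not a new geometric result, so the proof reduces to a bookkeeping argument combining Theorem \ref{thm:principal} with the unirationality results of \cite{FM20}. First I would list the precise set $K_{\mathrm{FM}}$ of values for which \cite{FM20} proves unirationality of $\cM_{2k}$ via Weierstrass fibrations and double covers. In particular, the small values $k\in\{1,2,3,4,5,8\}$ (which do not appear in the list of Theorem \ref{thm:principal}) and several further values up to $k=64$ are contained in $K_{\mathrm{FM}}$.

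Next, let $K_{\mathrm{new}}$ denote the set given in Theorem \ref{thm:principal}, namely
\[
K_{\mathrm{new}} = \{6,7,9,10,12,\dots,34,36,\dots,41,43,\dots,47,49,50,52,53,54,59,60,61,62,64,68,69,73,79,81,94,97\}.
\]
By Theorem \ref{thm:principal}, $\cM_{2k}$ is unirational for every $k\in K_{\mathrm{new}}$, and by \cite{FM20} it is unirational for every $k\in K_{\mathrm{FM}}$. Hence $\cM_{2k}$ is unirational for every $k\in K_{\mathrm{new}}\cup K_{\mathrm{FM}}$.

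It then remains to verify the set-theoretic inclusion
\[
\{k\le 50 : k\notin\{11,35,42,48\}\}\ \cup\ \{52,53,54,59,60,61,62,64,68,69,73,79,81,94,97\}\ \subseteq\ K_{\mathrm{new}}\cup K_{\mathrm{FM}}.
\]
For the part $k\le 50$ with $k\notin\{11,35,42,48\}$, the inspection is a finite case check: the integers $\{6,7,9,10,12,\dots,34,36,\dots,41,43,\dots,47,49,50\}$ lie in $K_{\mathrm{new}}$, while the remaining ones $\{1,2,3,4,5,8\}$ are supplied by $K_{\mathrm{FM}}$. The values $\{52,53,54,59,60,61,62,64,68,69,73,79,81,94,97\}$ lie in $K_{\mathrm{new}}$ directly.

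The main (and essentially only) obstacle is not mathematical but organizational: one must carefully reconcile the exact list of $k$-values treated in \cite{FM20} with the output of Theorem \ref{thm:principal}, and in particular double-check that the excluded set $\{11,35,42,48\}$ is genuinely the complement of $K_{\mathrm{new}}\cup K_{\mathrm{FM}}$ inside $\{1,\dots,50\}$. No new projective constructions or moduli-theoretic arguments are needed beyond those already established.
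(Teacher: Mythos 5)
Your proposal is correct and matches the paper's own (implicit) argument: the paper derives this theorem purely by combining Theorem \ref{thm:principal} with the values already established in \cite{FM20}, exactly the union-and-bookkeeping check you describe, with the small values such as $k\in\{1,2,3,4,5,8\}$ supplied by \cite{FM20}. No further argument is given or needed.
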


Our strategy involves a systematic study of the projective models of $U\oplus \langle -2k \rangle$-polarized K3 surfaces in $\PP^3$, $\PP^4$ and $\PP^5$ containing either two smooth rational curves or an elliptic curve and a smooth rational curve meeting at one point. We observe that similar ideas could be used to find new projective models, allowing curves of higher genus or working in higher dimensional projective spaces. The first approach seems however to produce no new cases. On the other hand, our method leads to the unirationality of the moduli spaces of many other lattice polarized (non-elliptic) K3 surfaces of Picard rank at least $3$; we have not included these ones in this article.

This research leads to several follow-up questions. First of all, one would like to understand the Kodaira dimension of $\cM_{2k}$ for the values of $k$ in the remaining gaps. Moreover, it would be of great interest to find connections between the spaces $\cM_{2k}$ and other known geometric objects. One such connection was found in \cite{BH17}, where the authors showed that the moduli space $\cM_{56}$ is birational to a $\PP^1$-bundle over the universal Brill--Noether variety $\mathcal{W}_{9,6}^1$ parametrizing curves of genus $9$ together with a pencil of degree $6$. Their strategy was to use the relative canonical resolution of these curves on rational normal quartic scrolls. We hope that our methods can be used to reveal new reincarnations of the moduli spaces $\cM_{2k}$ since we provide explicit methods to study such elliptic K3 surfaces and the geometry of their moduli spaces.

The article is organized as follows. In Section \ref{sec:generalstrategy} we explain the general strategy to prove the main theorem. More precisely, we first find an exhaustive list of possible projective models for $U \oplus \langle -2k \rangle$-polarized K3 surfaces as complete intersection in $\PP^3$, $\PP^4$ and $\PP^5$. This is then used to construct a dominant rational map $I \dashrightarrow \mathcal{M}_{2k}$ (as in diagram \ref{incidenceVarietyK3}), where $I$ is a unirational parameter space (cf. Section \ref{unir}). Section \ref{experimentalData} contains the experimental data obtained with our program in \emph{Macaulay2}, with some examples. Finally, we extend  these contructions in Section \ref{nodalK3} to the case of nodal elliptic K3 surfaces.

All our constructions are implemented in \emph{Macaulay2} (see \cite{macaulay2}) and are available at the authors' homepage \cite{M2support}.

\subsection*{Acknowledgements} We would like to thank Klaus Hulek and Matthias Schütt for useful discussions and for reading an early draft of this manuscript. We also thank the anonymous referee for carefully reading the paper and suggesting several improvements. The first author acknowledges partial support from the DFG Grant Hu 337/7-1.

\section{General strategy} \label{sec:generalstrategy}

Our strategy is to construct projective models of K3 surfaces in $\PP^n$ for $3 \le n \le 5$ containing suitable pairs of curves. Let $U$ be the hyperbolic plane. We focus on elliptic K3 surfaces of Picard rank at least $3$, that is, $U\oplus \langle -2k \rangle$-polarized K3 surfaces for some $k \ge 1$.
We first recall the construction of the moduli space of lattice polarized K3 surfaces. Our main reference is \cite{Do96}.

For a hyperbolic lattice $L$ embedding primitively in the K3 lattice $\Lambda_{K3}=U^3\oplus E_8^2$, the moduli space $\mathcal{M}_L$ of $L$-polarized K3 surface is constructed as follows. Denote by $T=L^\perp_{\Lambda_{K3}}$ the orthogonal complement of $L$ in $\Lambda_{K3}$.
Let $\Omega_T$ be one of the two connected components of $\{w\in \PP(T\otimes \CC) : (w,w)=0, (w,\overline{w})>0\}$, and consider the group $\mathrm{O}^+(T)$ of isometries of $T$ preserving $\Omega_T$. We denote by $\widetilde{\mathrm{O}}^+(T)$ the subgroup of $\mathrm{O}^+(T)$ of isometries acting trivially on the discriminant group $A_T=T^\vee/T$ of $T$, where $T^\vee$ is the dual lattice of $T$ (see \cite[Section~1.3]{Nik79} for the precise lattice-theoretical definitions). Then $\mathcal{M}_L$ is defined as the quotient $\widetilde{\mathrm{O}}^+(T)\backslash \Omega_T$. By the classical result of Baily and Borel \cite{BB66}, $\mathcal{M}_L$ is a quasi-projective variety of dimension $\rk(T)-2=20-\rk(L)$.

An \emph{$L$-polarized K3 surface} is a pair $(X,j)$, where $X$ is a K3 surface and $j:L\hookrightarrow \mathrm{NS}(X)$ is a primitive embedding of $L$ in the N\'eron-Severi group of $X$. The pair $(X,j)$ defines the point in $\mathcal{M}_L$ corresponding to the unique (up to scalars) nondegenerate holomorphic $2$-form $\omega_X\in j(L)^\perp_{\Lambda_{K3}}$ on $X$. By the Torelli theorem, $\mathcal{M}_L$ is the coarse moduli space of $L$-polarized K3 surfaces (cf. \cite[Corollary 3.2]{Do96}).

In our case we have $L_{2k}=U\oplus \langle -2k\rangle$, so the moduli spaces $\mathcal{M}_{2k}:=\mathcal{M}_{L_{2k}}$ we are interested in have dimension $17$.

For any fixed $k$, a very general $U\oplus \langle -2k \rangle$-polarized K3 surface $X$ has $\mathrm{NS}(X)=U\oplus \langle -2k \rangle$. In order to prove unirationality of some of these moduli spaces, we ask $\mathrm{NS}(X)$ to admit a basis given by a very ample polarization and two other smooth curves, 
where we distinct  two cases: 
\begin{itemize}
    \item[Case 1:] Two smooth rational curves meeting transversely at several points in general position;
    \item[Case 2:] A smooth rational curve and an elliptic curve meeting transversely at one point.
\end{itemize}

The reason for this assumption is that we want to realize these K3 surfaces as complete intersections in $\PP^n$, $3\le n\le 5$, and therefore we need a very ample divisor on $X$ of square $2n-2$ providing such a projective model. Since our K3 surfaces have Picard rank $3$, we need two extra curve classes, and it turns out that allowing for the two cases listed above gives already a lot of flexibility, enabling us to show the unirationality of many of the moduli spaces $\mathcal{M}_{2k}$. Experimental data suggests that allowing two extra classes of curves of higher genus does not help in proving the unirationality of the remaining spaces $\mathcal{M}_{2k}$, so one probably needs to consider K3 surfaces with polarizations of higher degree in order to extend the results of this paper.

Notice that in Case 2 the resulting K3 surfaces are automatically elliptic, since the elliptic curve induces the desired elliptic fibration, and the smooth rational curve becomes a section of the fibration.
On the other hand, the K3 surfaces resulting from Case 1 are elliptic only in some cases, depending on the arithmetic of the N\'eron-Severi lattice resulting from the construction. 
We explain in Section \ref{sub:search} how to check whether the constructed K3 surfaces are actually elliptic.

\subsection{The construction}\label{construction}

We describe the construction in detail for Case 2. Let $n,d,\gamma$ be integers such that $3\le n \le 5$, $d \ge 3$ and  $\gamma \ge 1$. 
\begin{itemize}
 \item [Step 1:] We construct a smooth elliptic curve $E$ of degree $d$ in $\PP^n$ with a distinguished point $p$.
 \item [Step 2:] We construct a smooth rational curve $\Gamma$ of degree $\gamma$ intersecting $E$ transversely only at the point $p$.
 \item [Step 3:] We choose (if it exists) a smooth K3 surface $X$ in $\PP^n$ of degree $2n-2$ containing $E\cup \Gamma$. 
\end{itemize}
Then, we get a K3 surface $X$ containing an elliptic curve $E$ and a smooth rational curve $\Gamma$ with the following lattice embedding
\begin{equation}\label{genericLattice}
\begin{pmatrix}
 2n-2 & \gamma & d \\
 \gamma & -2 & 1 \\ 
 d & 1 & 0
\end{pmatrix}
\cong U \oplus \langle 2n-2 - 2d\gamma - 2d^2 \rangle  \hookrightarrow \mathrm{NS}(X).
\end{equation}
In particular, we set $k = d^2 +d\gamma -n+1$. We check case by case that $X$ is a $U\oplus \langle -2k \rangle$-polarized K3 surface by showing that such a lattice embedding is always primitive (see Section \ref{prim}). By abuse of notation, we denote by $E$ and $\Gamma$ the classes in $\mathrm{NS}(X)$ of the corresponding curves under this lattice embedding. Let $\cM_{2k}$ be the moduli space of $U\oplus \langle -2k \rangle$-polarized K3 surfaces. $\cM_{2k}$ is an irreducible variety of dimension $17$.

We can easily adapt the above strategy to Case 1. First, we construct a smooth rational curve $\Gamma_1 \subseteq \PP^n$ of degree $\gamma_1$, together with $m$ points $p_1,\ldots,p_m\in \Gamma_1$. Then, we construct a second smooth rational curve $\Gamma_2$ intersecting $\Gamma_1$ transversely, precisely at $p_1,\ldots,p_m$. Finally, Step 3 remains unchanged: we just choose (if it exists) a smooth K3 surface $X$ in $\PP^n$ containing $\Gamma_1 \cup \Gamma_2$.\newline

We compute in \textit{Macaulay2} that the constructed curves are smooth points of a component of the right dimension in the corresponding Hilbert schemes. By standard semicontinuity arguments (see e.g. \cite{SchHandbook}), we will perform our computations over a finite field (the main reason for doing this is that the computation is much faster over a finite field, but our constructions also work over the rationals)\color{black}. Finally, a dimension count shows that the construction dominates the corresponding moduli space $\cM_{2k}$. We will present more details in the rest of the section.

\subsection{Unirationality} \label{unir}

The constructions described in Step 1, 2 and 3 can be realized as incidence varieties which are shown to be unirational.

\begin{remark}
We denote by $H_{d,g,n}$ the open subscheme of the Hilbert scheme parametrizing smooth irreducible curves of degree $d$ and genus $g$ in $\PP^n$. We notice that $H_{d,g,n}$ is irreducible if $g\in \{0,1\}$ and $n\in \{3,4,5\}$ by \cite{Ein}. Moreover, we can easily compute the dimension of $H_{d,g,n}$ for $g\in \{0,1\}$ by using the fact that every smooth curve $C\subseteq \PP^n$ of genus $g \le 1$ and degree $d>0$ is \textit{non-special}, that is, $H^1(\mathcal{O}_C(1))=0$. Indeed, by using the Euler sequence and the defining sequence for the normal bundle of $C\subseteq \PP^n$, we deduce that $H^1(\mathcal{N}_{C/\PP^n})=0$, and thus
$$\dim{H_{d,g,n}}=h^0(\mathcal{N}_{C/\PP^n})=\begin{cases}
(n+1)d+(n-3) &\text{if } g=0\\
(n+1)d &\text{if } g=1.
\end{cases}$$
\end{remark}

\subsection*{Step 1}

We include the proofs of the following classical results for the sake of completeness. 

\begin{lemma}
 Let $n,d\ge 3$ be integers.
 The incidence variety
 $$
 H_{d,1,n}^1 := \{(E,p)\ |\ p\in E\} \subseteq H_{d,1,n} \times \PP^n
 $$
 of elliptic curves of degree $d$ in $\PP^n$ with a marked point is unirational. Its dimension is 
 $$
 \dim H_{d,1,n}^1 = \dim H_{d,1,n} + 1.
 $$
\end{lemma}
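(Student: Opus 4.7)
The dimension formula is immediate from the forgetful morphism
$$
\pi \colon H_{d,1,n}^{1} \longrightarrow H_{d,1,n}, \qquad (E,p)\longmapsto E,
$$
which is smooth with one-dimensional fibres equal to the curves themselves; hence $\dim H_{d,1,n}^{1} = \dim H_{d,1,n}+1$.

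For unirationality, my plan is to exhibit a dominant rational map to $H_{d,1,n}^{1}$ from a rational variety, built up in four layers. First, every $(E,p_0)$ with $E$ an abstract elliptic curve and $p_0$ an origin admits a Weierstrass model $E_{a,b}\colon y^2 z = x^3 + axz^2 + bz^3 \subset \PP^2$ with $p_0=[0{:}1{:}0]$, and the total space of the universal Weierstrass curve is a hypersurface in $\mathbb{A}^4$ that is linear in $b$, hence rational. Second, by the Abel--Jacobi isomorphism $\Pic^d(E)\cong E$ normalised by $p_0$, a degree $d$ line bundle on $(E,p_0)$ is of the form $L = \O_E((d-1)p_0 + Q)$ for a unique $Q \in E$, so triples $(E,p_0,L)$ are again parametrised by the rational universal elliptic curve. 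Third, for $d\ge 3$ the bundle $L$ is very ample with $h^0(L)=d$, and the choice of an $(n+1)$-dimensional subspace $V \subset H^0(L)$ ranges over a Grassmannian bundle over the previous rational base, which is itself rational (collapsing to the base when $d=n+1$). Fourth, the identification $\PP(V)^{\vee}\cong \PP^n$ is encoded by a projective frame, i.e.\ a point of a $PGL_{n+1}$-torsor, and since $PGL_{n+1}$ is a rational group this step preserves rationality. Adding finally a marked point on the resulting embedded curve amounts to one further copy of the universal elliptic curve.

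Composing these four layers yields a rational parameter variety together with a dominant rational map to $H_{d,1,n}^{1}$, which proves the claim. The main non-formal verification is that on a Zariski open subset of the Grassmannian bundle the induced map $\phi_V \colon E \to \PP^n$ is a closed embedding whose image has degree exactly $d$; this is a standard open condition, guaranteed generically by the very ampleness of $L$ and the general position of $V$. In particular I do not expect any real obstacle, and the argument should go through uniformly across the ranges $3\le n \le 5$ and $n+1 \le d \le 8$ relevant to the paper.
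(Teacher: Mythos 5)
Your construction, as you acknowledge in the last sentence, only covers $d\ge n+1$: you need an $(n+1)$-dimensional subspace $V\subset H^0(L)$, and $h^0(L)=d$. But the lemma is stated for all $3\le d\le 8$, $3\le n\le 5$, and the degenerate cases $d\le n$ (where $E$ spans only a $\PP^{d-1}\subsetneq\PP^n$) are genuinely needed in the paper: Table \ref{table:ellrat} uses $(n,d)=(4,3)$ for $k=9$, $(n,d)=(4,4)$ for $k=17,21$, $(n,d)=(5,4)$ for $k=16,20,24,28$, $(n,d)=(5,5)$ for $k=26,31$, etc. For these values your Grassmannian layer is empty and the argument stops, so the claim that the restricted range is ``relevant to the paper'' is not correct. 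The fix is exactly the paper's move: when $d-1<n$, embed $E$ by the complete linear system $|L|$ into $\PP^{d-1}$ and then choose a general linear embedding $\PP^{d-1}\hookrightarrow\PP^n$, a choice parametrized by an open subset of a space of matrices, so unirationality is preserved. Without this case your proof does not establish the lemma as stated.

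Two smaller points. The final layer of your tower is not like the others: the fibre of ``adding a marked point'' is the elliptic curve $E$ itself, not a rational variety, and the total space is the fibre product of the universal Weierstrass family with your base over the $(a,b)$-plane, which is not a Zariski-locally trivial bundle; its rationality does not follow formally from the rationality of the previous layers. It is true — eliminate $b$ from one copy of the Weierstrass equation and solve the other linearly for $a$ — but this needs saying; alternatively, take the marked point to be the Weierstrass origin $[0{:}1{:}0]$ (any point of $E$ may serve as origin), which removes the extra layer entirely. Likewise, ``a $PGL_{n+1}$-torsor is harmless because $PGL_{n+1}$ is rational'' is not a valid principle (torsors under rational groups need not be trivial); what you actually use is the frame bundle of the tautological subbundle, a $GL_{n+1}$-torsor, hence Zariski-locally trivial, so the conclusion stands with the correct justification. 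Apart from these points your route is essentially the paper's: the paper parametrizes $(E,p,q)$ via the rational $\cM_{1,2}$, writes the embedding bundle as $\O_{E}(dq)$, maps to $\PP^{d-1}$ by a choice of basis, and then either projects to $\PP^n$ (your choice of $V$ is the same datum as a choice of projection centre) when $d-1>n$, or linearly embeds $\PP^{d-1}$ into $\PP^n$ when $d-1<n$ — the step your argument is missing.
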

\begin{proof}
The moduli space $\cM_{1,2}$ of elliptic curves marked with $2$ points is rational by \cite[Lemmas~1.1.3~and~1.1.4]{Be98}. In order to construct an elliptic curve $E$ of degree $d\ge 3$ in $\PP^n$ together with a marked point $p$, we start with a plane cubic curve $E'$ with two distinguished points $p$ and $q$. The choice of a basis of the vector space $V = H^0(E',\O_{E'}(d q))$ of dimension $d$ yields a birational map
$$
E' \longrightarrow E\subset \PP^{d-1}
$$
where the image $E$ is a smooth elliptic curve of degree $d$ (recall that all line bundles on $E'$ of degree $d$ are of the form $\O_{E'}(d q)$, see e.g. \cite[Theorem 6.16]{eisenbud-sy}). The choice of the basis is unirational since it is parametrized by an open subset of $V^d$. If $d-1>n$, then we project the curve $E$ birationally to a smooth elliptic curve of degree $d$ in $\PP^n$. If $d-1<n$, then we embed the ambient space $\PP^{d-1}$ into $\PP^n$ in order to get again an elliptic curve of degree $d$ in $\PP^n$. In both cases, we have to choose an appropriate linear subspace of $\PP^n$ or $\PP^{d-1}$, and this choice is clearly unirational.

Now consider the forgetful morphism $\phi :H_{d,1,n}^1\rightarrow H_{d,1,n}$. $\phi$ is dominant, since every elliptic curve of degree $d$ in $\PP^{d-1}$ arises as the image of a (plane or abstract) elliptic curve via a complete linear system of degree $d$, and every elliptic curve of degree $d$ in $\PP^n$ lies in a linear subspace of dimension $d-1$ if $n>d-1$.
Since the preimage of a general point $[E]\in H_{d,1,n}$ is isomorphic to $E$ itself, we conclude that $\dim H_{d,1,n}^1 = \dim H_{d,1,n} + 1$.
\end{proof}

\begin{remark}
Let $p_1,\ldots,p_m \in \PP^n$ be a set of points spanning a linear subspace $\PP^l \subseteq \PP^n$. We say that $p_1,\ldots,p_m$ are in general position if they are in general position inside $\PP^l$. In particular, we have $m \le l+1 \le n+1$.  
\end{remark}

\begin{lemma} \label{lemma:2rat}
Let $n\ge 3$, $\gamma\ge 1$ and $m \le \min\{n+1,\gamma+1\}$ be integers.
Fix points $p_1,\ldots,p_m \in \PP^n$ in general position. The variety
 $$
 H_{\gamma,0,n}(p_1,\ldots,p_m) := \{\Gamma \ni p_1,\ldots,p_m\} \subseteq H_{\gamma,0,n}
 $$
 of smooth rational curves of degree $\gamma$ in $\PP^n$ passing through $p_1,\ldots,p_m$ is irreducible and unirational. Moreover, it is non-empty, of dimension 
 $$
 \dim(H_{\gamma,0,n}(p_1,\ldots,p_m)) = \dim(H_{\gamma,0,n})-m(n-1).
 $$
\end{lemma}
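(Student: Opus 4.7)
The plan is to parametrize $H_{\gamma,0,n}(p_1,\ldots,p_m)$ by morphisms $\PP^1\to\PP^n$ of degree $\gamma$ together with the preimages $q_i := f^{-1}(p_i)$. Setting $V := H^0(\PP^1,\O(\gamma))$ and fixing lifts $\tilde p_i\in k^{n+1}\setminus\{0\}$, I would introduce the incidence variety
$$
I := \bigl\{\bigl((q_i),[f_0{:}\cdots{:}f_n]\bigr)\in \bigl((\PP^1)^m\setminus \Delta\bigr)\times \PP(V^{\oplus(n+1)}) \ :\ (f_j(q_i))_{j=0}^n\in\langle \tilde p_i\rangle\ \forall i\bigr\},
$$
where $\Delta$ is the big diagonal. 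The first projection $I\to (\PP^1)^m\setminus\Delta$ has fiber over $(q_1,\ldots,q_m)$ equal to the projectivization of the kernel of the linear map $V^{\oplus(n+1)}\to \bigoplus_{i=1}^m k^{n+1}/\langle\tilde p_i\rangle$. Since $m\le \gamma+1$, the point evaluations at distinct $q_i$ on $V$ are linearly independent, so this map is surjective of rank $mn$, and the fiber has constant dimension $(n+1)(\gamma+1)-mn-1$. Thus $I$ is a Zariski locally trivial projective bundle over a rational base; in particular it is irreducible and rational, of dimension $m+(n+1)(\gamma+1)-mn-1$.

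Next I would consider the natural rational map $\pi : I\dashrightarrow H_{\gamma,0,n}(p_1,\ldots,p_m)$ sending $((q_i),[f_\bullet])$ to the image of $f:\PP^1\to\PP^n$, well-defined on the open locus where $f$ is a closed embedding onto a smooth curve. The map $\pi$ is dominant because every smooth $\Gamma$ admits a parametrization, with $q_i := f^{-1}(p_i)$ canonically defined. A generic fiber of $\pi$ is a $\mathrm{PGL}_2$-torsor (reparametrizations of $\PP^1$), hence of dimension $3$. Consequently $H_{\gamma,0,n}(p_1,\ldots,p_m)$ inherits irreducibility and unirationality from $I$, with dimension
$$
\dim I - 3 \;=\; (n+1)\gamma+(n-3)-m(n-1) \;=\; \dim H_{\gamma,0,n}-m(n-1).
$$

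Non-emptiness I would settle by explicit interpolation: fix distinct $q_i\in\PP^1$ and arbitrary nonzero scalars $\lambda_i$. Since $m\le \gamma+1$, for each coordinate $j=0,\ldots,n$ one can solve for $f_j\in V$ with $f_j(q_i)=\lambda_i \tilde p_i^{\,j}$, with the remaining $\gamma+1-m$ coefficients per coordinate free. A generic choice of these free parameters, together with the general position hypothesis on the $p_i$ and the bound $m\le n+1$, produces a morphism $f:\PP^1\to\PP^n$ whose image is a smooth rational curve of degree $\gamma$ through $p_1,\ldots,p_m$.

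The main technical obstacle is ensuring that the smooth-embedding locus inside $I$ is dense, so that $\pi$ is actually defined on a dense open subset. In the narrow ranges $3\le n\le 5$ and $\gamma\le 8$, a standard dimension comparison between the loci of non-embeddings (multiple covers, singular images, or images lying in too small a linear span) and $I$ settles this, and the interpolation above supplies one explicit smooth example from which irreducibility of $I$ propagates openness of smoothness and of the embedding property.
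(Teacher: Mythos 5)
Your reduction to the space of parametrizations is sound and is in fact close to the paper's own first step: the paper works with $\mathrm{Emb}_\gamma(\PP^1,\PP^n)$ together with marked points $x_i$ subject to the linear conditions $f(x_i)=p_i$, which is essentially your incidence variety $I$, and it deduces irreducibility and unirationality the same way. Where you genuinely diverge is the dimension count: the paper computes $H^1(\mathcal{N}_{\Gamma/\PP^n}(-D))=0$ via the Euler and normal bundle sequences and identifies the dimension with $\chi(\mathcal{N}_{\Gamma/\PP^n}(-D))$, whereas you fiber $I$ over $(\PP^1)^m\setminus\Delta$ and quotient by the $\mathrm{PGL}_2$-reparametrizations; your arithmetic is correct and this is a legitimate, more elementary alternative (at the price of not seeing the Hilbert-scheme tangent space directly).

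The genuine gap is non-emptiness, equivalently the density of the embedding locus $I^{\mathrm{emb}}\subseteq I$ on which your map $\pi$ is defined (density and non-emptiness coincide since $I$ is irreducible). Your interpolation step only produces some tuple $(f_0,\ldots,f_n)$ satisfying the linear conditions; the claim that "a generic choice of the free parameters produces a morphism whose image is a smooth rational curve of degree $\gamma$" is exactly the statement to be proved, and your closing paragraph is circular on this point: you invoke "the interpolation above" as supplying an explicit smooth example, while the proposed dimension comparison with the loci of non-embeddings is not carried out. Moreover that comparison is not automatic: codimension bounds for multiple covers, maps with singular image, or maps with base points hold in the full space of degree-$\gamma$ maps, but they do not descend to the linear slice cut out by the interpolation conditions --- a priori the slice could lie entirely inside a bad locus --- so the general position of $p_1,\ldots,p_m$ must enter in an essential way (for instance, for $\gamma=2$, $m=3$ the statement fails for collinear points). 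To close the gap you would either have to let the $p_i$ vary and bound the bad incidence loci over $(\PP^n)^m$, or argue as the paper does: take a rational normal curve of degree $\gamma$ in $\PP^\gamma$ through $m\le\gamma+1$ points in general position and push it into $\PP^n$ by a general linear map sending those points to $p_1,\ldots,p_m$ (possible since $m\le n+1$ and the $p_i$ are general); its image is a smooth rational curve of degree $\gamma$ through the $p_i$, providing the required point of $I^{\mathrm{emb}}$, after which your argument goes through.
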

\begin{proof}
Since the variety of embeddings $\mathrm{Emb}_\gamma(\PP^1,\PP^n)$ of degree $\gamma$ is rational and the morphism $\mathrm{Emb}_\gamma(\PP^1,\PP^n)\rightarrow H_{\gamma,0,n}$ sending a morphism to its image is dominant, we have that $H_{\gamma,0,n}$ is irreducible and unirational. If $p_1,\ldots,p_m$ are points in $\PP^n$ in general position, we consider the subvariety $T$ of $\mathrm{Emb}_\gamma(\PP^1,\PP^n)\times (\PP^1)^m$ consisting of embeddings $f:\PP^1\hookrightarrow \PP^n$ and $m$ points $x_1,\ldots,x_m \in \PP^1$ such that $f(x_i)=p_i$ for all $1 \le i \le m$. Since the conditions $f(x_i)=p_i$ are linear in the coefficients of the polynomials defining the embedding $f$, and $T$ dominates $H_{\gamma,0,n}(p_1,\ldots,p_m)$, we deduce that $H_{\gamma,0,n}(p_1,\ldots,p_m)$ is irreducible and unirational.

In order to show that $H_{\gamma,0,n}(p_1,\ldots,p_m)$ is non-empty, fix a linear map $h:\PP^\gamma \rightarrow \PP^n$ and points $q_1,\ldots,q_m\in \PP^\gamma$ in general position such that $h(q_i)=p_i$ for all $1 \le i \le m$. Since there is always a smooth rational normal curve $C\subseteq \PP^\gamma$ of degree $\gamma$ passing through $m \le \gamma+1$ points in general position, then $\Gamma := h(C)\subseteq \PP^n$ is the desired smooth rational curve.

Finally, we have to compute $\dim(H_{\gamma,0,n}(p_1,\ldots,p_m))$. Let $\Gamma \in H_{\gamma,0,n}(p_1,\ldots,p_m)$ be a smooth rational curve as constructed above, and consider the divisor $D:=p_1+\ldots+p_m$ over $\Gamma$. Let $H$ be the restriction of the hyperplane class on $\PP^n$ to $\Gamma$. Then the exact sequences
$$0 \rightarrow \mathcal{T}_\Gamma(-D) \rightarrow \mathcal{T}_{\PP^n}|_\Gamma(-D) \rightarrow \mathcal{N}_{\Gamma/\PP^n}(-D)\rightarrow 0$$
and
$$0 \rightarrow \mathcal{O}_\Gamma(-D) \rightarrow \mathcal{O}_\Gamma(H-D)^{n+1} \rightarrow \mathcal{T}_{\PP^n}|_\Gamma(-D) \rightarrow 0,$$
combined with the fact that $H^1(\mathcal{O}_\Gamma(H-D))=0$ (by Serre duality, as $\mathrm{deg}(\mathcal{O}_\Gamma(H-D))>-2$), imply that $H^1(\mathcal{N}_{\Gamma/\PP^n}(-D))=0$. Thus, the dimension of $H_{\gamma,0,n}(p_1,\ldots,p_m)$ coincides with $\chi(\mathcal{N}_{\Gamma/\PP^n}(-D))$, and a straightforward computation using the two previous exact sequences yields
$$\chi(\mathcal{N}_{\Gamma/\PP^n}(-D))=(\gamma+1)(n+1)-4-m(n-1)=\dim(H_{\gamma,0,n})-m(n-1).$$

\end{proof}

\subsection*{Step 2}
As a consequence of the previous discussion and the fact that $m$ general points on a smooth rational curve of degree $\gamma$ lie in general position for $m\le \gamma +1$, we obtain:

\begin{lemma}
Let $\gamma_1,\gamma_2\ge 1$ and $m \le \min\{n+1,\gamma_2+1\}$ be integers.
 The incidence variety  
$$
I_{\gamma_1,\gamma_2,n}^m = \{(\Gamma_1,\Gamma_2) \mid   \Gamma_1 \cap \Gamma_2 = \{p_1,\ldots,p_m\}\} \subseteq H_{\gamma_1,0,n} \times H_{\gamma_2,0,n}
$$
of two smooth rational curves of degree $\gamma_1$ and $\gamma_2$, intersecting transversely at $m$ points in general position, is irreducible and unirational, of dimension 
\begin{align*}
\dim I_{\gamma_1,\gamma_2,n}^m  = \dim H_{\gamma_1,0,n} + m + \dim H_{\gamma_2,0,n} - m(n-1).
\end{align*}
\end{lemma}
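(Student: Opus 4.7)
The plan is to realize $I^m_{\gamma_1,\gamma_2,n}$ as the image of an auxiliary incidence variety whose structure is transparent via successive projections, using the previous lemma for the last fiber. Define
$$\widetilde{I} := \bigl\{(\Gamma_1,(p_1,\ldots,p_m),\Gamma_2) \mid \Gamma_1\in H_{\gamma_1,0,n},\ p_1,\ldots,p_m \in \Gamma_1 \text{ distinct},\ \Gamma_2 \in H_{\gamma_2,0,n}(p_1,\ldots,p_m)\bigr\}.$$
Projecting $\widetilde{I}$ onto $H_{\gamma_1,0,n}$, the fiber over $\Gamma_1$ is an open subset of $\Gamma_1^m\cong(\PP^1)^m$, which is rational of dimension $m$. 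The observation preceding the statement ensures that $m$ general points on $\Gamma_1$ lie in general position in $\PP^n$ (since $m\le\gamma_2+1\le\gamma_1+1$), so the dense open locus where this holds still surjects onto $H_{\gamma_1,0,n}$. Over such tuples, the last projection $(\Gamma_1,(p_i),\Gamma_2)\mapsto(\Gamma_1,(p_i))$ has fibers $H_{\gamma_2,0,n}(p_1,\ldots,p_m)$, which by Lemma~\ref{lemma:2rat} are non-empty, irreducible and unirational of dimension $\dim H_{\gamma_2,0,n}-m(n-1)$. Chaining these three pieces, $\widetilde{I}$ is irreducible and unirational of dimension
$$\dim H_{\gamma_1,0,n}+m+\dim H_{\gamma_2,0,n}-m(n-1).$$

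Next I would study the forgetful rational map $\widetilde{I}\dashrightarrow H_{\gamma_1,0,n}\times H_{\gamma_2,0,n}$, $(\Gamma_1,(p_i),\Gamma_2)\mapsto(\Gamma_1,\Gamma_2)$. On a dense open subset of $\widetilde{I}$ the intersection $\Gamma_1\cap\Gamma_2$ is transverse and equals $\{p_1,\ldots,p_m\}$ exactly, so the image lies in $I^m_{\gamma_1,\gamma_2,n}$; the fiber over such a pair $(\Gamma_1,\Gamma_2)$ records only an ordering of the $m$ intersection points and hence has cardinality $m!$. Consequently the map is generically finite of degree $m!$ onto $I^m_{\gamma_1,\gamma_2,n}$, and the latter inherits irreducibility, unirationality and the claimed dimension from $\widetilde{I}$.

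The main technical obstacle is showing that for a generic element of $\widetilde{I}$ the intersection $\Gamma_1\cap\Gamma_2$ is indeed transverse with no additional intersection points. Both are open conditions on $\widetilde{I}$, so it suffices to exhibit one explicit triple satisfying them. This can be done by adapting the construction in the proof of Lemma~\ref{lemma:2rat}: pick rational normal curves of degrees $\gamma_1,\gamma_2$ in linear subspaces $\PP^{\gamma_i}\subseteq \PP^n$ passing through prescribed preimages of $p_1,\ldots,p_m$, and project generically to $\PP^n$. Transversality at each $p_i$ follows because the tangent direction of $\Gamma_2$ at $p_i$ varies in a positive-dimensional family as $\Gamma_2$ ranges over $H_{\gamma_2,0,n}(p_1,\ldots,p_m)$, while the absence of extra intersection points is a generic condition in $\PP^n$ for $n\ge 3$ controlled by a direct dimension count, since two general rational curves of degrees $\gamma_1,\gamma_2\le 8$ in $\PP^n$ are already disjoint.
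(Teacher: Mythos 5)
Your proposal is correct and follows essentially the same route as the paper: fibering over $H_{\gamma_1,0,n}$, then the choice of $m$ points on $\Gamma_1$ (which are in general position since $m\le\gamma_2+1\le\gamma_1+1$), then applying Lemma \ref{lemma:2rat} for the curves $\Gamma_2$ through them, with transversality and exactness of the intersection handled as open (dense) conditions. The paper only records this as a brief remark ("dominated by a projective bundle over $H_{\gamma_1,0,n}$"), so your version merely spells out the same argument, including the generically finite degree-$m!$ forgetful map.
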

\begin{proof}
By Lemma \ref{lemma:2rat} we have that the variety $H_{\gamma_2,0,n}(p_1,\ldots,p_m)$ is dominated by a variety $P(p_1,\ldots,p_m)$ isomorphic to a projective space, which parametrizes embeddings $f:\PP^1\hookrightarrow \PP^n$ of degree $\gamma_1$ whose image contains the points $p_i$.
We can construct a variety $\mathcal{P}$ over $H_{\gamma_1,0,n}\times (\PP^1)^m$ such that the fiber over a general point $(C,p_1,\ldots,p_m)$ with $p_1,\ldots,p_m\in C$ is the projective space $P(p_1,\ldots,p_m)$. $\mathcal{P}$ is generically a projective bundle over $H_{\gamma_1,0,n}\times (\PP^1)^m$ (at least on the open subset where the projection map is proper and flat). Hence $\mathcal{P}$ is irreducible and unirational. Moreover $\mathcal{P}$ dominates $I_{\gamma_1,\gamma_2,n}^m$, by sending an embedding $f$ in the fiber $P(p_1,\ldots,p_m)$ over $(C,p_1,\ldots,p_m)$ to the pair $(f(\PP^1),C)$. Thus $I_{\gamma_1,\gamma_2,n}^m$ is irreducible and unirational as well, since asking the intersection to be transversal is an open condition.
\end{proof}

\begin{lemma}
Let $n,d\ge 3$ and $\gamma \ge 1$ be integers.
 The incidence variety  
$$
I_{d,\gamma,n} = \{(E,\Gamma) \mid   E\cap \Gamma = \{pt\}\} \subseteq H_{d,1,n} \times H_{\gamma,0,n}
$$
of an elliptic and a smooth rational curve intersecting transversely at one point is irreducible and unirational, of dimension 
\begin{align*}
\dim I_{d,\gamma,n}  = \dim H_{d,1,n} + 1 + \dim H_{\gamma,0,n} - (n-1).
\end{align*}
\end{lemma}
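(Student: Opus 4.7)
The plan is to mirror the proof of the preceding lemma for two rational curves, replacing the marked rational curve by a marked elliptic curve. I would introduce the three-fold incidence
$$J = \{(E,p,\Gamma) \mid p \in E,\ p \in \Gamma\} \subseteq H_{d,1,n}^1 \times H_{\gamma,0,n}$$
and study the projection $\pi\colon J \to H_{d,1,n}^1$, whose fiber over $(E,p)$ is precisely $H_{\gamma,0,n}(p)$. By Lemma \ref{lemma:2rat} applied with $m=1$, these fibers are irreducible, unirational and non-empty of dimension $\dim H_{\gamma,0,n} - (n-1)$. Combined with the first lemma of Step 1, which gives $H_{d,1,n}^1$ irreducible and unirational of dimension $\dim H_{d,1,n}+1$, standard fibration arguments yield that $J$ is irreducible and unirational, of dimension
$$\dim H_{d,1,n} + 1 + \dim H_{\gamma,0,n} - (n-1).$$

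Next I would analyze the forget-the-point projection $\pi'\colon J \to H_{d,1,n} \times H_{\gamma,0,n}$. For any triple $(E,p,\Gamma)$ in the open locus where $E \cap \Gamma$ is reduced to the single transverse point $p$, the marked point is recovered intrinsically from the image pair, so $\pi'$ is birational onto its image, and that image is exactly $I_{d,\gamma,n}$. Hence $I_{d,\gamma,n}$ inherits irreducibility, unirationality and dimension from $J$.

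The one non-formal step, and the main obstacle, is verifying that this open locus is non-empty, i.e. that there exists at least one triple $(E,p,\Gamma)$ for which $\Gamma$ meets $E$ transversely only at $p$. For $n \geq 3$ two generic curves in $\PP^n$ do not meet at all, so the single imposed incidence $p \in \Gamma$ is the only source of intersection. Concretely, I would take $(E,p)$ general and choose $\Gamma$ as a general rational curve of degree $\gamma$ through $p$ (for example, a general rational normal curve of degree $\gamma$ in a suitable linear span containing $p$); the locus in $H_{\gamma,0,n}(p)$ where $\Gamma$ acquires a further intersection point with $E$ is a proper closed subvariety, since the condition of passing through any additional prescribed point cuts down the dimension by $n-1 \geq 2$. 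Existence of such a $\Gamma$, together with the fact that transversality at $p$ is an open condition (it fails only on a proper closed subvariety cut out by the vanishing of the tangent-space pairing), shows that the desired open locus in $J$ is non-empty, completing the proof.
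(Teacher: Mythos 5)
Your proposal is correct and follows essentially the same route as the paper: the authors obtain this lemma as an immediate analogue of the two-rational-curves case, viewing the incidence variety as fibered over the marked-curve space $H^1_{d,1,n}$ with fibers $H_{\gamma,0,n}(p)$ of dimension $\dim H_{\gamma,0,n}-(n-1)$, and dismissing transversality as an open condition. Your write-up merely makes explicit the steps (birationality of forgetting the marked point, non-emptiness of the transverse locus) that the paper leaves implicit with its ``Similarly, we have'' remark.
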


\subsection*{Step 3}

Recall that a K3 surface of degree $2n-2$ with $3\le n \le 5$ is a complete intersection in $\PP^n$. More precisely, in the case $n=3$ the K3 surface is a quartic surface, while in the case $n=4$ (resp. $n=5$) the K3 surface is a complete intersection of a quadric and a cubic (resp. three quadrics). This is in fact the reason why we restrict to polarizations of degree $\le 5$.

In particular the choice of a K3 surface $X$ containing $E\cup \Gamma$ (or $\Gamma_1 \cup \Gamma_2$) is parametrized by an iterated Grassmannian $G$.  In the case $n=3$, $G=|\mathcal{I}_{E \cup \Gamma}(4)|$, in the case $n=4$, $G=\PP \mathcal{E}$ is a projective bundle over $|\mathcal{I}_{E \cup \Gamma}(2)|$, whose fiber over $q \in |\mathcal{I}_{E \cup \Gamma}(2)|$ is $\PP \mathcal{E}_q$ defined in the exact sequence
$$0 \rightarrow H^0(\mathcal{O}_{\PP^4}(1)) \stackrel{\cdot q}{\rightarrow} H^0(\mathcal{I}_{E \cup \Gamma}(3)) \rightarrow \mathcal{E}_q \rightarrow 0.$$
Finally, in the case $n=5$, our parameter space is $G=\mathrm{Gr}(3,H^0(E\cup \Gamma, \mathcal{I}_{E\cup \Gamma}(2)))$. All these parameter spaces are rational. We are going to discuss in detail Case 2, as Case 1 can be handled analogously.\newline

Let $k = d \gamma + d^2 - n + 1$. We consider the following incidence variety
$$
I := \{(X,E,\Gamma)\ |\  E\cup\Gamma\subseteq X, \ E \cap \Gamma =\{pt\}  \} \subseteq G\times I_{d,\gamma,n}.
$$

We denote by $I^{sm}\subseteq I$ the open subvariety of $I$ containing triples $(X,E,\Gamma)$ with $X$ smooth.

\begin{lemma} \label{lemma:nonempty}
The image of $\pi_1: I^{sm} \rightarrow \cM_{2k}$ sending a triple $(X,E,\Gamma)$ to the isomorphism class of the smooth K3 surface $X$ is open in $\cM_{2k}$. In particular, since $\cM_{2k}$ is irreducible, either $I^{sm} = \emptyset$ or $\pi_1$ is dominant.
\end{lemma}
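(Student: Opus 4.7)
The plan is to prove openness by a deformation-theoretic argument: for every $(X_0, E_0, \Gamma_0) \in I^{sm}$ with image $[X_0] = \pi_1(X_0, E_0, \Gamma_0)$, I will produce an open neighborhood $U$ of $[X_0]$ in $\cM_{2k}$ contained in $\pi_1(I^{sm})$. Passing to an \'etale neighborhood $B$ of $[X_0]$ carrying a local universal family $\pi : \mathcal{X} \to B$, the $U \oplus \langle -2k \rangle$-polarization guarantees that the classes $H_0, E_0, \Gamma_0 \in \mathrm{NS}(X_0)$ extend to locally constant classes $H_t, E_t, \Gamma_t \in \mathrm{NS}(\mathcal{X}_t)$ with the same intersection pairing for every $t \in B$.

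I would then verify that each of the geometric conditions cutting out $I^{sm}$ persists in an open neighborhood of $t_0$. Very-ampleness of $H_t$ is open, so $H_t$ embeds $\mathcal{X}_t \hookrightarrow \PP^n$ as a smooth K3 surface of degree $2n-2$ for $t$ near $t_0$. For the class $E_t$, Riemann--Roch gives $\chi(E_t) = 2$, and ampleness of $H_t$ together with $H_t \cdot E_t = d > 0$ forces $h^2(E_t) = h^0(-E_t) = 0$, hence $h^0(E_t) \geq 2$; upper semicontinuity then yields $h^0(E_t) = 2$ in a neighborhood of $t_0$ (using $h^0(E_0) = 2$, as $E_0$ is a smooth elliptic curve), so $|E_t|$ is a base-point-free pencil whose general member is a smooth elliptic curve by Bertini. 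For $\Gamma_t$, the analogous argument yields $h^0(\Gamma_t) = 1$ nearby, and the unique effective representative forms a flat family specializing to the smooth rational $\Gamma_0$, hence remains a smooth rational curve on an open neighborhood. Finally, transversality of the intersection and its containing a single point are open conditions, using that $E \cdot \Gamma = 1$ is fixed throughout the family.

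Intersecting these open conditions yields an open $U \subseteq B$ contained in $\pi_1(I^{sm})$, establishing openness of the image. The dichotomy then follows immediately: since $\cM_{2k}$ is irreducible, a nonempty open subset is dense, so either $I^{sm} = \emptyset$ or $\pi_1$ is dominant.

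The main obstacle is controlling the geometry of the deformed lattice classes: one must exclude that the general member of $|E_t|$ degenerates to a reducible or singular divisor, and that the unique $(-2)$-representative of $\Gamma_t$ splits into a chain of $(-2)$-curves or acquires a node. Both are handled by the openness of smoothness in a flat family once the expected values of $h^0$ are established via Riemann--Roch combined with semicontinuity. The remaining conditions, namely very-ampleness of $H_t$, smoothness of $\mathcal{X}_t \subseteq \PP^n$, and transversality of the intersection, are standard open conditions.
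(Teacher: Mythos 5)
Your argument is correct in substance but follows a genuinely different route from the paper. The paper argues globally: for any $U\oplus\langle -2k\rangle$-polarized $X$, after moving the class $H$ by the Weyl group to make it big and nef, Saint-Donat's criterion says $H$ fails to be very ample only if $\mathrm{NS}(X)$ contains a class $C$ with $C^2=-2$, $HC=0$ or $D$ with $D^2=0$, $HD=2$; these are closed conditions on $\cM_{2k}$, so the very-ample locus is open, and on it $H$ embeds $X$ in $\PP^n$ with the desired pair of curves, identifying this open locus with the image of $\pi_1$. You instead argue locally around a point of the image: pass to a local universal family, keep the classes $H_t,E_t,\Gamma_t$ by the lattice polarization, use openness of very ampleness, and use Riemann--Roch plus semicontinuity ($h^0(E_t)=2$, $h^0(\Gamma_t)=1$ nearby) together with openness of smoothness in the resulting flat families of divisors to propagate the geometric configuration. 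The paper's argument is shorter and yields a global description of the image as the complement of explicit Noether--Lefschetz-type loci; yours avoids Saint-Donat and the Weyl-group adjustment at the cost of more bookkeeping with relative linear systems. Two small points in your write-up deserve care: base-point-freeness of $|E_t|$ for nearby $t$ is not automatic (nefness of $E_t$ is not an open condition), but you do not need it --- a single smooth member suffices, obtained from openness of the smooth locus in the universal divisor over $\PP(\pi_*\mathcal{L}_E)$ and openness of the (flat) projection to the base; and irreducibility of the nearby members should be recorded (e.g.\ via semicontinuity of $h^0(\mathcal{O}_{D_t})$, after which $p_a=0$, resp.\ $p_a=1$, pins down a smooth rational, resp.\ elliptic, curve, and $E_t\cdot\Gamma_t=1$ then forces a single transverse intersection point). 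Finally, like the paper, you gloss over the fact that the nearby embedded surfaces actually lie in the parameter space $G$ (e.g.\ are complete intersections for $n=4,5$), which is needed for them to come from $I^{sm}$; this is again an open condition and can be checked by a similar semicontinuity argument.
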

\begin{proof}
Choose a basis $\{H,E,\Gamma\}$ of $U\oplus \langle -2k \rangle$ such that the intersection matrix of $\{H,E,\Gamma\}$ is as in \ref{genericLattice}, and let $X$ be a $U\oplus \langle -2k \rangle$-polarized K3 surface. Up to the action of the Weyl group, we can assume that $H \in \mathrm{NS}(X)$ is big and nef. By Saint-Donat's result \cite[Theorem 5.2]{SD}, $H$ is very ample if and only if there is no element $C\in \mathrm{NS}(X)$ with $C^2=-2$ and $HC=0$, and no element $D\in \mathrm{NS}(X)$ with $D^2=0$ and $HD=2$. Both conditions are closed in $\cM_{2k}$, and if $H$ is indeed very ample, it embeds $X$ in $\PP^n$ as a surface containing the desired pair of curves.
\end{proof}

In order to show the unirationality of $\cM_{2k}$, we will use the following proposition. We denote by $\pi_2: I \rightarrow I_{d,\gamma,n}$ the forgetful map that sends the triple $(X,E,\Gamma)$ to the pair $(E,\Gamma)$.

\begin{proposition} \label{prop:all}
Assume that there exists a pair $(E,\Gamma) \in I_{d,\gamma,n}$ such that the fiber $F:=\pi_2^{-1}(E,\Gamma)$ contains an element in $I^{sm}$. If the number $\dim{I_{d,\gamma,n}}+\dim{F}$ coincides with the \textit{expected dimension} of $I$
$$\dim{\cM_{2k}}+\dim{\mathrm{PGL}(n+1)}+\dim{|E|}+\dim{|\Gamma|}=18+\dim{\mathrm{PGL}(n+1)},$$
then $\pi_2$ is dominant. As a consequence $I$ is unirational, $\pi_1:I \dashrightarrow \cM_{2k}$ is a dominant rational map, and thus, $\cM_{2k}$ is unirational.
\end{proposition}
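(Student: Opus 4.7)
My plan is to analyse the irreducible component $I_0 \subseteq I$ containing the given smooth triple $(X_0,E_0,\Gamma_0) \in F \cap I^{sm}$, show that $\dim I_0$ equals the expected dimension $18+\dim \mathrm{PGL}(n+1)$, and then deduce the three claims of the proposition in turn. First, $F=\pi_2^{-1}(E_0,\Gamma_0)$ is irreducible since, by the description of $G$ at the start of Step 3, it is a projective space for $n=3$, a projective bundle over a projective space for $n=4$, or a Grassmannian for $n=5$. An irreducible subvariety of $I$ meeting the component $I_0$ must be contained in it, so $F \subseteq I_0$.

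Next I would establish $\dim I_0 = 18+\dim \mathrm{PGL}(n+1)$ via matching bounds. For the upper bound, $\pi_1|_{I_0} \colon I_0 \to \cM_{2k}$ has image of dimension at most $17$ and fibers of dimension at most $\dim \mathrm{PGL}(n+1)+\dim|E|+\dim|\Gamma|=\dim \mathrm{PGL}(n+1)+1$, accounting for the projective embedding of a K3 with generically finite automorphism group, the $1$-dimensional elliptic pencil $|E|$ on $X$, and the rigid $(-2)$-class $\Gamma$. For the lower bound I would invoke deformation theory: the relevant component of the Hilbert scheme of K3 surfaces in $\PP^n$ is smooth at $X_0$ of dimension $17+\dim \mathrm{PGL}(n+1)$, and every first-order deformation of $X_0$ as an embedded K3 extends to a deformation of the full triple because $E_0$ deforms inside its pencil while $\Gamma_0$, being rigid, is carried along. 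This yields $\dim I_0 \geq 18+\dim \mathrm{PGL}(n+1)$.

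With $\dim I_0=\dim I_{d,\gamma,n}+\dim F$ by the hypothesis, dominance of $\pi_2|_{I_0}$ is an immediate application of the fiber dimension theorem combined with upper semicontinuity of fiber dimension: the generic fiber of $\pi_2|_{I_0}$ has dimension at most $\dim F$, so $\dim \pi_2(I_0) \geq \dim I_0-\dim F=\dim I_{d,\gamma,n}$, forcing $\pi_2(I_0)$ to be dense in the irreducible $I_{d,\gamma,n}$. Unirationality of $I_0$ then follows because, on the open subset of $I_{d,\gamma,n}$ over which the fibers of $\pi_2$ have the generic dimension $\dim F$, $I_0$ is birational to a Grassmannian bundle over the unirational base $I_{d,\gamma,n}$ (Section \ref{unir}) with rational fibers. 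Finally, the same dimension count shows $\dim \pi_1(I_0) \geq \dim I_0-(\dim \mathrm{PGL}(n+1)+1)=17$, so $\pi_1 \colon I \dashrightarrow \cM_{2k}$ is dominant, and unirationality of $I_0$ transfers to $\cM_{2k}$.

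The principal obstacle I foresee is the deformation-theoretic lower bound for $\dim I_0$. One must verify that the infinitesimal deformation map from the triple $(X_0,E_0,\Gamma_0)$ to the embedded $X_0 \subset \PP^n$ is surjective, which reduces to showing that the obstructions to deforming $E_0$ and $\Gamma_0$ inside the deforming K3 vanish. This relies on the specific structure of the classes $E,\Gamma$ in $\mathrm{NS}(X_0)=U\oplus\langle -2k\rangle$, namely $\dim|E_0|=1$ for the elliptic pencil and the rigidity of the smooth $(-2)$-curve $\Gamma_0$ as the unique effective representative of its class.
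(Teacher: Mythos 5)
The decisive step of your plan --- the lower bound $\dim I_0\ge 18+\dim\mathrm{PGL}(n+1)$ --- is not established, and the argument you sketch for it is incorrect. The Hilbert scheme of the embedded K3 surfaces in question (quartics in $\PP^3$, $(2,3)$- and $(2,2,2)$-complete intersections) is smooth at $X_0$ of dimension $19+\dim{\mathrm{PGL}(n+1)}$, not $17+\dim{\mathrm{PGL}(n+1)}$ (e.g.\ $34=19+15$ for quartics). More seriously, it is false that ``every first-order deformation of $X_0$ as an embedded K3 extends to a deformation of the full triple'': under a general embedded deformation the classes of $E_0$ and $\Gamma_0$ cease to be algebraic (the general quartic has Picard rank $1$ and contains no line and no elliptic cubic), so the curves are carried along only over a Noether--Lefschetz-type locus of codimension up to $2$. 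If instead by ``the relevant component'' you mean that locus, then asserting it is smooth of dimension $17+\dim\mathrm{PGL}(n+1)$ at $X_0$ is essentially the statement to be proved, so the argument becomes circular. Since in your scheme the dominance of $\pi_1$ is only deduced at the very end from this dimension count, nothing else in the proposal can be used to repair the bound: the whole proof hinges on this unproved inequality.

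The paper closes exactly this gap by running the logic in the opposite order: Lemma \ref{lemma:nonempty} shows that the image of $\pi_1$ is \emph{open} in $\cM_{2k}$ (very ampleness of $H$ fails only along closed conditions), so $I^{sm}\ne\emptyset$ and irreducibility of $\cM_{2k}$ give dominance of $\pi_1$ first; the bound $\dim I\ge 18+\dim\mathrm{PGL}(n+1)$ then follows from the fiber dimensions of $\pi_1$ ($\dim\mathrm{PGL}(n+1)$ for the choice of embedding plus $1$ for the elliptic pencil, $\Gamma$ being rigid), and the hypothesis $\dim I_{d,\gamma,n}+\dim F=18+\dim\mathrm{PGL}(n+1)$ forces $\pi_2$ to be dominant. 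Your remaining steps --- dominance of $\pi_2$ via semicontinuity of fiber dimension, the generic projective/Grassmannian-bundle structure of $I$ over the unirational base $I_{d,\gamma,n}$, and the transfer of unirationality to $\cM_{2k}$ --- do match the paper, and your upper bound on $\dim I_0$ is harmless but not needed; to make the proof work you must either import the openness argument of Lemma \ref{lemma:nonempty} (or an equivalent Noether--Lefschetz/effectivity argument showing the lattice-polarized locus with its curves has dimension at least $17$), rather than deduce it from a dimension count whose key inequality was never proved. (A minor point: a fiber $F$ meeting a component $I_0$ need not lie in $I_0$; simply define $I_0$ as a component containing the irreducible $F$.)
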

\begin{proof}
Since by assumption $I^{sm}\ne \emptyset$, then by Lemma \ref{lemma:nonempty} the morphism $\pi_1: I^{sm} \rightarrow \cM_{2k}$ is dominant and induces a dominant rational map $\pi_1 : I \dashrightarrow \cM_{2k}$. The map $\pi_1$ can be decomposed as the composition $q_2 \circ q_1$, where $q_1$ sends a triple $(X,E,\Gamma)$ to the K3 surface $X$, and $q_2$ sends a (smooth) K3 surface to its isomorphism class. The set of automorphisms of $\PP^n$ fixing a K3 surface $X\subseteq \PP^n$ is finite so that the dimension of the fiber of $q_2$ is equal to $\dim{\mathrm{PGL}(n+1)}$. Moreover, the fiber of $q_1$ is $1$-dimensional since the elliptic curve $E$ moves in a $1$-dimensional pencil and $\Gamma$ is rigid on $X$. Since $\pi_1$ is dominant, necessarily $\dim{I}$ is at least the expected dimension
$$\dim{\cM_{2k}}+\dim{\mathrm{PGL}(n+1)}+\dim{|E|}+\dim{|\Gamma|}=18+\dim{\mathrm{PGL}(n+1)}.$$
Now let $d \le \dim{F}$ be the minimal dimension of the fibers of $\pi_2$. By semicontinuity of the fiber dimension, there exists on open dense subset $U$ of $I_{d,\gamma,n}$ such that $\dim{\pi_2^{-1}(E',\Gamma')}=d$ for all $(E',\Gamma') \in U$. If $d < \dim{F}$, then
$$\dim{I}= \dim{I_{d,\gamma,n}} + d < \dim{I_{d,\gamma,n}}+\dim{F},$$
which is a contradiction, since by assumption the number $ \dim{I_{d,\gamma,n}}+\dim{F}$ coincides with the expected dimension of $I$. This implies that $\pi_2$ is dominant and that the general fiber of $\pi_2$ is isomorphic to $G$, and thus it gives to $I$ (generically) the structure of a projective bundle over $I_{d,\gamma,n}$ (since by cohomology and base change $\pi_2$ is a projective bundle over the open subset of $I_{d,\gamma,n}$ where $\pi_2$ is proper, flat and the fibers are isomorphic to $G$).
In particular $I$ is unirational, and therefore $\cM_{2k}$ is unirational as well.
\end{proof}

The previous proposition proves the existence of the diagram

\begin{equation}\label{incidenceVarietyK3}
 \xymatrix{
 & I \ar@{-->}[dl]_{\pi_1} \ar[dr]^{\pi_2} & \\ 
 \mathcal{M}_{2k} & & I_{d,\gamma,n}
 }
\end{equation}
whenever $I^{sm}\ne \emptyset$ and a certain equality of dimensions holds. As we already remarked, the general fiber of $\pi_2$ is isomorphic to the iterated Grassmannian $G$ defined above, of dimension
\begin{equation}\label{dimK3throughCurves}
\dim \pi_2^{-1}(E,\Gamma) =
\begin{cases}
 h^0(E\cup \Gamma, \mathcal{I}_{E\cup \Gamma}(4))-1 & \text{ for } n=3, \\
 h^0(E\cup \Gamma, \mathcal{I}_{E\cup \Gamma}(2)) - 1 + h^0(E\cup \Gamma, \mathcal{I}_{E\cup \Gamma}(3)) - 6 & \text{ for } n=4, \\
 3\cdot (h^0(E\cup \Gamma, \mathcal{I}_{E\cup \Gamma}(2))-3) & \text{ for } n = 5.
\end{cases}
\end{equation}
In the case $n=4$, we choose a quadric hypersurface through $E\cup \Gamma$ and then a cubic hypersurface through $E\cup \Gamma$ which is not a multiple of the chosen quadric.\\

We check with \textit{Macaulay2} that there exists a pair $(E,\Gamma) \in I_{d,\gamma,n}$ such that the fiber $F=\pi_2^{-1}(E,\Gamma)$ contains one element in $I^{sm}$ and that
$$\dim{I_{d,n,\gamma}}+\dim{F}-\dim{\mathrm{PGL}(n+1)}=18.$$
In order to compute the dimension of $I_{d,n,\gamma}$, we check with \textit{Macaulay2} that $(E,\Gamma)$ is a smooth point of $I_{d,n,\gamma}$.
Notice that the same strategy works analogously for Case 1, with the only difference that the previous number has to be $17$ instead of $18$. This follows from the fact that the two smooth rational curves are rigid on the K3 surface, and thus the fiber of $q_1$ in the proof of Proposition \ref{prop:all} is $0$-dimensional.
All the experimental data can be found in Tables \ref{table:ratrat}, \ref{table:ellrat}, \ref{table:nodal}. The interested reader may find the \textit{Macaulay2} code at \cite{M2support}.

We are also adapting the above strategy to prove unirationality for some quasi-polarized K3 surfaces. We construct K3 surfaces having a node (i.e. having a unique $A_1$-singularity) and containing either a smooth rational curve or a smooth elliptic curve (see Section \ref{nodalK3}). 

\subsection{Search for the lattices} \label{sub:search}

In this section we explain how we obtain an exhaustive list of projective models of elliptic K3 surfaces of Picard number $3$ in Case 1 and 2.

In Case 2, the K3 surface is automatically elliptic. In Case 1, we actually have to check that the lattice contains a copy of the hyperbolic plane. By \cite[Corollary 1.13.3]{Nik79}, the lattices $U\oplus \langle -2k\rangle$ are unique in their genus, so an even lattice $L$ of rank $3$, signature $(1,2)$ and determinant $2k$ contains a copy of $U$ if and only if the genus of $L$ coincides with the genus of $U\oplus \langle -2k \rangle$  (for the definition of the genus of a lattice, we refer to \cite{CS}). This amounts to computing the discriminant group of $L$; we do not report these straightforward computations here. 

In order to obtain an exhaustive, but finite, list of possible projective models for such K3 surfaces, we want to bound the degrees of the elliptic and smooth rational curves. This bound arises from the fact that curves of fixed genus and ``high'' degree do not lie on hypersurfaces of ``small'' degree. We explain this in detail for the case of two rational curves in $\PP^3$ meeting transversely at $m$ points in general position; the strategy in the other cases will be completely analogous.

Let $\Gamma_1,\Gamma_2$ be smooth rational curves in $\PP^3$ of degree $\gamma_1$ and $\gamma_2$, respectively, meeting at $m$ points. Assuming the maximal rank conjecture for the nodal union of two curves (see \cite{Lar17} for a proof in the smooth case), the short exact sequence
$$0 \rightarrow \mathcal{I}_{\Gamma_1 \cup \Gamma_2}(4) \rightarrow \cO_{\PP^3}(4) \rightarrow \cO_{\Gamma_1 \cup \Gamma_2}(4) \rightarrow 0$$
implies that $h^0(\mathcal{I}_{\Gamma_1 \cup \Gamma_2}(4))>0$ if and only if
\begin{equation} \label{eq:ineq}
    35=h^0(\cO_{\PP^3}(4)) > h^0(\cO_{\Gamma_1 \cup \Gamma_2}(4))=4(\gamma_1+\gamma_2)+2-m.      
\end{equation}
This allows us to obtain a (conjectural) bound on the degrees $\gamma_1$, $\gamma_2$ and on the number of intersection points $m$.
Indeed notice that the number $m$ of intersection points is bounded by the degrees $\gamma_1,\gamma_2$, since there are no smooth rational curves of degree $\gamma$ passing through more than $2 \gamma$ points of $\PP^n$ in general position (cf. the formula in Lemma \ref{lemma:2rat}). Therefore, the inequality \ref{eq:ineq} provides a bound for the degrees $\gamma_1,\gamma_2$ of the two smooth rational curves. More precisely, if $\gamma_1\ge \gamma_2$, then $m\le 2\gamma_2$ and therefore
$$35>4(\gamma_1+\gamma_2)+2-m\ge 4\gamma_1 + 2\gamma_1 + 2 \ge 4\gamma_1+4,$$
so $\gamma_2\le \gamma_1\le 7$.

Now, we can produce the list of all possible projective models of elliptic K3 surfaces of Picard number $3$ in Case 1 and 2: for every $\gamma_1\le \gamma_2 \le 7$ and every $m \le 2\gamma_1$ satisfying the inequality (\ref{eq:ineq}), we check whether the corresponding lattice contains a copy of the hyperbolic plane by looking at its genus.

The same search works analogously in the case of nodal K3 surfaces since we only have to bound the degree of the smooth rational curve or of the smooth elliptic curve.

\subsection{Primitivity} \label{prim}

The K3 surfaces that we construct in Tables \ref{table:ratrat}, \ref{table:ellrat}, \ref{table:nodal} are in fact $U\oplus \langle  -2k \rangle$-polarized K3 surfaces for suitable values of $k$.  In order to show this, we have to prove that the embeddings as in Equation \ref{genericLattice} are primitive. We will perform a case-by-case inspection, depending on the divisibility of $k$. Recall that, if $D \in U\oplus  \langle -2k \rangle$ is divisible by $r$ in $\mathrm{NS}(X)$, then $r^2 \mid k$.

\subsection*{Table \ref{table:ratrat}}
We consider the lattice embedding $\Lambda^1_{n,\gamma_1,\gamma_2} \hookrightarrow \mathrm{NS}(X)$ in Equation \ref{emb1}. Assume first $k\ne 50$. We observe that $k$ is either squarefree or a square, so we may assume that $k$ is a square (i.e. $k\in \{25,36,49,64,81\}$), otherwise the embedding is automatically primitive. It is easy to notice that, if $k=r^2$, then $\Gamma_1+\Gamma_2$ is a vector of square $0$ (since $m=2$ in all five cases) intersecting the three elements of the basis with multiplicity multiple of $r$. Therefore the embedding $\Lambda^1_{n,\gamma_1,\gamma_2} \hookrightarrow \mathrm{NS}(X)$ is primitive if and only if $\Gamma_1+\Gamma_2$ is not divisible in $\mathrm{NS}(X)$. Since $\Gamma_1+\Gamma_2$ represents the union of two rational curves on $X$ meeting at two points, it is a fiber of an elliptic fibration on $X$, and therefore it is not divisible in $\mathrm{NS}(X)$.

If instead $k=50$, the embedding is primitive if and only if the divisor $H-\Gamma_2$ is primitive in $\mathrm{NS}(X)$, since $H-\Gamma_2$ intersects the three elements of the basis with multiplicity $5$. If by contradiction $H-\Gamma_2$ were divisible by $5$, then $\frac{1}{5}(H-\Gamma_2)$ would have square $0$ and intersection $1$ with $H$; this is a contradiction since there are no curves of degree $1$ and arithmetic genus $1$.

\subsection*{Table \ref{table:ellrat}}
We consider the lattice embedding $\Lambda^2_{n,d,\gamma} \hookrightarrow \mathrm{NS}(X)$ in Equation \ref{emb2}. We are going to apply the following strategy for all the cases.\\
Let $D \in \Lambda_{n,d,\gamma}^2$ be a generator of $\langle E,\Gamma\rangle^\perp$. Since $\langle E,\Gamma\rangle$ is a copy of the hyperbolic plane, we have that $D^2=-2k$ and the basis $\{E,\Gamma,D\}$ gives an explicit isomorphism $\Lambda_{n,d,\gamma}^2 \cong U \oplus \langle -2k \rangle$. Therefore, the embedding $\Lambda_{n,d,\gamma}^2 \hookrightarrow \mathrm{NS}(X)$ is primitive if and only if $D$ is primitive in $\mathrm{NS}(X)$. Hence assume that $\frac{1}{2}D \in \mathrm{NS}(X)$ (thus $4 \mid k$); if $D$ is divisible by some other number $r$ the argument is analogous. 
A straightforward computation yields
$$D=H-(\gamma+2d)E-d\Gamma.$$
We distinguish some cases depending on the parity of $d,\gamma$.
 \\
$\bullet$  $d\equiv \gamma \equiv 0 \pmod{2}$: In this case $D$ is divisible by $2$ if and only if $H$ is divisible by $2$, but the hyperplane section is primitive in $\mathrm{NS}(X)$.
\\
$\bullet$ $d\equiv 1, \gamma \equiv 0 \pmod{2}$: $D$ is divisible by $2$ if and only if $H-\Gamma$ is divisible by $2$. But $\frac{1}{2}(H-\Gamma)$ would have square $0$ and intersection $2$ with $H$, and there are no curves of degree $2$ and arithmetic genus $1$.
\\
$\bullet$ $d\equiv 0, \gamma \equiv 1 \pmod{2}$: $D$ is divisible by $2$ if and only if $H-E$ is divisible by $2$. If $k=24$, $\frac{1}{2}(H-E)$ would be a curve of degree $2$ and arithmetic genus $1$. If $k=40$, $\frac{1}{2}(H-E)$ would have square $-2$, so it would be either effective or anti-effective. But $(H-E)H<0$, $(H-E)E>0$, and this is a contradiction since $H$ and $E$ are nef. Finally, if $k=68$, $\frac{1}{2}(H-E)$ would have square $-2$ and intersection $0$ with $H$; this is a contradiction since the K3 surfaces we are considering are generically smooth.
\\
$\bullet$ $d\equiv \gamma \equiv 1 \pmod{2}$: $D$ is divisible by $2$ if and only if $H-E-\Gamma$ is divisible by $2$. If $k \in \{16,28\}$, $\frac{1}{2}(H-E-\Gamma)$ would have square $-2$, but $(H-E-\Gamma)H<0$ and $(H-E-\Gamma)E>0$ leads to a contradiction as above. If instead $k=52$, $\frac{1}{2}(H-E-\Gamma)$ would be a $(-2)$-curve orthogonal to $H$ which is again a contradiction.

\subsection*{Table \ref{table:nodal}}
The reasoning is completely analogous to the previous case.

\section{Experimental data}\label{experimentalData}

\subsection{Case 1: Two smooth rational curves}

Let $\gamma_1,\gamma_2$ and $m$ be integers. 
Let $\Gamma_1,\Gamma_2 \subset \PP^n$ be two smooth rational curves of degree $\gamma_1$ and $\gamma_2$, respectively, intersecting transversely at $m$ points. If $X\subseteq \PP^n$ is a K3 surface containing the union $\Gamma_1 \cup \Gamma_2$, then there exists a lattice embedding
\begin{equation} \label{emb1}
\Lambda_{n,\gamma_1,\gamma_2}^1:=
\begin{pmatrix}
 2n-2 & \gamma_1 & \gamma_2 \\
 \gamma_1 & -2 & m \\ 
 \gamma_2 & m & -2
\end{pmatrix}
\hookrightarrow \text{NS}(X).
\end{equation}
For suitable choices, this lattice is isomorphic to $U \oplus \langle -2 k \rangle$ for some integer $k$. In Table \ref{table:ratrat} we specify this integer $k$ in every case and list the data obtained with our \textit{Macaulay2} program.
Note that there is more than one configuration of two smooth rational curves yielding a K3 surface in $\cM_{2k}$. We only list one possibility for each $k$ in Table \ref{table:ratrat}.

\subsection{Case 2: An elliptic and a smooth rational curve}

Let $d,n,\gamma$ be integers such that $3\le d\le 8$, $3\le n \le 5$ and  $1\le \gamma \le 7$. 
Let $E\subset \PP^n$ be an elliptic curve of degree $d$, and let $\Gamma\subset \PP^n$ be a smooth rational curve of degree $\gamma$ intersecting $E$ transversely at one point. We denote by $X\subseteq \PP^n$ a K3 surface with the following lattice embedding 
\begin{equation} \label{emb2}
\Lambda_{n,d,\gamma}^2 :=
\begin{pmatrix}
 2n-2 & d & \gamma \\
 d & 0 & 1 \\ 
 \gamma & 1 & -2
\end{pmatrix}
\cong U \oplus \langle -2( d^2 + d\gamma + -n +1) \rangle \hookrightarrow \mathrm{NS}(X).
\end{equation}
We set $k = d^2 +d \gamma -n+1$. In Table \ref{table:ellrat} we specify this integer $k$ in every case and list the data obtained with our \textit{Macaulay2} program. 

\begin{example}{$k=10$:} 
Let $X\subseteq \PP^3$ be a smooth  quartic surface containing a line and an elliptic curve of degree $3$ intersecting transversely at one point. Then $X$ is a K3 surface with the following primitive lattice embedding
$$
\begin{pmatrix}
 4 & 3 & 1 \\
 3 & 0 & 1 \\ 
 1 & 1 & -2
\end{pmatrix}
\cong U \oplus \langle -20 \rangle \hookrightarrow \mathrm{NS}(X).
$$

We recall the dimension count in this example:
\begin{align*}
& \dim(\text{Hilb}_{3\cdot t}(\PP^3)) + 1 + \dim(\text{Hilb}_{t+1}(\PP^3)) - 2  + (h^0(E\cup \Gamma, \mathcal{I}_{E\cup \Gamma}(4))-1) - \dim \text{PGL}(4)= \\
& = 12 + 1 + (4-2) + (19-1) - (4^2-1) = 18  = \dim \cM_{2k} + \dim |E| + \dim |\Gamma|.
\end{align*}
\end{example}

\begin{table}[H]
\centering
 \begin{tabular}{|c|c|c|c|c|c|c|c|}
  \hline
  $k$ & $n$ & $\gamma_1$ & $\gamma_2$ & $m$ & $\dim(\text{Hilb}_{\gamma_1\cdot t + 1}(\PP^n))$ & $\dim(\text{Hilb}_{\gamma_2\cdot t + 1}(\PP^n))$ & $\dim \pi_2^{-1}(\Gamma_1,\Gamma_2)$ \\ \hline \hline 
  $10$ & $3$ & $1$ & $1$ & $0$ & $4$ & $4$ & $24$ \\ \hline
  $13$ & $3$ & $2$ & $1$ & $0$ & $8$ & $4$ & $20$ \\ \hline
  $17$ & $4$ & $2$ & $1$ & $0$ & $11$ & $6$ & $24$ \\ \hline
  $19$ & $3$ & $3$ & $1$ & $1$ & $12$ & $4$ & $17$ \\ \hline
  $21$ & $4$ & $2$ & $2$ & $1$ & $11$ & $11$ & $21$ \\ \hline
  $25$ & $4$ & $3$ & $2$ & $2$ & $16$ & $11$ & $18$ \\ \hline
  $26$ & $3$ & $3$ & $3$ & $0$ & $12$ & $12$ & $8$ \\ \hline
  $29$ & $4$ & $4$ & $1$ & $0$ & $21$ & $6$ & $14$ \\ \hline
  $31$ & $5$ & $3$ & $2$ & $1$ & $20$ & $14$ & $21$ \\ \hline
  $34$ & $3$ & $5$ & $1$ & $0$ & $20$ & $4$ & $8$ \\ \hline
  $36$ & $5$ & $3$ & $3$ & $2$ & $20$ & $20$ & $18$ \\ \hline
  $37$ & $3$ & $5$ & $1$ & $1$ & $20$ & $4$ & $9$ \\ \hline
  $39$ & $5$ & $3$ & $3$ & $1$ & $20$ & $20$ & $15$ \\ \hline
  $41$ & $5$ & $4$ & $3$ & $0$ & $26$ & $20$ & $6$ \\ \hline
  $43$ & $3$ & $4$ & $3$ & $1$ & $16$ & $12$ & $5$ \\ \hline
  $46$ & $5$ & $5$ & $3$ & $4$ & $32$ & $20$ & $12$ \\ \hline
  $49$ & $5$ & $4$ & $3$ & $2$ & $26$ & $20$ & $12$ \\ \hline
  $50$ & $5$ & $5$ & $3$ & $0$ & $32$ & $20$ & $0$ \\ \hline
  $53$ & $5$ & $6$ & $1$ & $0$ & $38$ & $8$ & $6$ \\ \hline
  $59$ & $5$ & $5$ & $3$ & $3$ & $32$ & $20$ & $9$ \\ \hline
  $61$ & $5$ & $5$ & $3$ & $1$ & $32$ & $20$ & $3$ \\ \hline
  $64$ & $3$ & $5$ & $3$ & $2$ & $20$ & $12$ & $2$ \\ \hline
  $69$ & $5$ & $6$ & $3$ & $4$ & $38$ & $20$ & $6$ \\ \hline
  $73$ & $5$ & $5$ & $4$ & $4$ & $32$ & $26$ & $6$ \\ \hline
  $79$ & $5$ & $6$ & $3$ & $3$ & $38$ & $20$ & $3$ \\ \hline
  $81$ & $5$ & $6$ & $3$ & $2$ & $38$ & $20$ & $0$ \\ \hline
  $94$ & $5$ & $7$ & $3$ & $4$ & $44$ & $20$ & $0$ \\ \hline
  $97$ & $3$ & $5$ & $4$ & $4$ & $20$ & $16$ & $0$ \\ \hline
 \end{tabular}
 \caption{List of lattices in Case 1}
 \label{table:ratrat}
\end{table}

\begin{table}[H]
\centering
 \begin{tabular}{|c|c|c|c|c|c|c|}
  \hline
  $k$ & $n$ & $d$ & $\gamma$ & $\dim(\text{Hilb}_{d\cdot t}(\PP^n))$ & $\dim(\text{Hilb}_{\gamma\cdot t + 1}(\PP^n))$ & $\dim \pi_2^{-1}(E,\Gamma)$ \\ \hline \hline 
  $9$ & $4$ & $3$ & $1$ & $15$ & $6$ & $23$ \\ \hline
  $10$ & $3$ & $3$ & $1$ & $12$ & $4$ & $19$ \\ \hline
  $12$ & $4$ & $3$ & $2$ & $15$ & $11$ & $18$ \\ \hline
  $13$ & $3$ & $3$ & $2$ & $12$ & $8$ & $15$ \\ \hline
  $15$ & $4$ & $3$ & $3$ & $15$ & $16$ & $13$ \\ \hline
  $16$ & $3$ & $3$ & $3$ & $12$ & $12$ & $11$ \\ \hline
  $16$ & $5$ & $4$ & $1$ & $24$ & $8$ & $24$ \\ \hline
  $17$ & $4$ & $4$ & $1$ & $20$ & $6$ & $18$ \\ \hline
  $18$ & $3$ & $4$ & $1$ & $16$ & $4$ & $15$ \\ \hline
  $20$ & $5$ & $4$ & $2$ & $24$ & $14$ & $18$ \\ \hline
  $21$ & $4$ & $4$ & $2$ & $20$ & $11$ & $13$ \\ \hline
  $22$ & $3$ & $4$ & $2$ & $16$ & $8$ & $11$ \\ \hline
  $24$ & $5$ & $4$ & $3$ & $24$ & $20$ & $12$ \\ \hline
  $26$ & $3$ & $4$ & $3$ & $16$ & $12$ & $7$ \\ \hline
  $26$ & $5$ & $5$ & $1$ & $30$ & $8$ & $18$ \\ \hline
  $27$ & $4$ & $5$ & $1$ & $25$ & $6$ & $13$ \\ \hline
  $28$ & $3$ & $5$ & $1$ & $20$ & $4$ & $11$ \\ \hline
  $28$ & $5$ & $4$ & $4$ & $24$ & $26$ & $6$ \\ \hline
  $30$ & $3$ & $4$ & $4$ & $16$ & $16$ & $3$ \\ \hline
  $31$ & $5$ & $5$ & $2$ & $30$ & $14$ & $12$ \\ \hline
  $32$ & $4$ & $5$ & $2$ & $25$ & $11$ & $8$ \\ \hline
  $33$ & $3$ & $5$ & $2$ & $20$ & $8$ & $7$ \\ \hline
  $38$ & $3$ & $5$ & $3$ & $20$ & $12$ & $3$ \\ \hline
  $38$ & $5$ & $6$ & $1$ & $36$ & $8$ & $12$ \\ \hline
  $39$ & $4$ & $6$ & $1$ & $30$ & $6$ & $8$ \\ \hline
  $40$ & $3$ & $6$ & $1$ & $24$ & $4$ & $7$ \\ \hline
  $44$ & $5$ & $6$ & $2$ & $36$ & $14$ & $6$ \\ \hline
  $46$ & $3$ & $6$ & $2$ & $24$ & $8$ & $3$ \\ \hline
  $52$ & $5$ & $7$ & $1$ & $42$ & $8$ & $6$ \\ \hline
  $54$ & $3$ & $7$ & $1$ & $28$ & $4$ & $3$ \\ \hline
  $59$ & $5$ & $7$ & $2$ & $42$ & $14$ & $0$ \\ \hline
  $68$ & $5$ & $8$ & $1$ & $48$ & $8$ & $0$ \\ \hline
 \end{tabular}
 \caption{List of lattices in Case 2}
 \label{table:ellrat}
\end{table}

\section{Construction of nodal elliptic K3 surfaces} \label{nodalK3}

We adapt our above construction in order to deal with nodal K3 surfaces having one $A_1$-singularity. This allows us to complete the proof of Theorem \ref{thm:principal}. Indeed, for certain values of $k$, the constructions explained above only yield examples where the locus $I^{sm}$, corresponding to smooth K3 surfaces, is empty. Therefore, allowing in this section the K3 surfaces to have a singular point, we are able to prove the unirationality of $\mathcal{M}_{2k}$ for further values of $k$; notice that in all cases the general members of $\mathcal{M}_{2k}$ will be the minimal resolutions of the nodal K3 surfaces constructed here.

As in Step 1 of Construction \ref{construction}, we construct a curve of degree $d$ (that can either be a smooth rational or elliptic curve). In the second step we choose a point $p\in \PP^n$. The final step is to construct a K3 surface $X$ containing the given curve and having an $A_1$-singularity at $p$.  

We restrict our considerations to the case of an elliptic curve $E$ with a point $p$ on it (the other cases are treated similarly). The desingularization of $X$ contains the exceptional divisor $C_p \cong \PP^1$ over the $A_1$-singularity.  Then we have the following lattice embedding (which will also be primitive by Section \ref{prim})
\begin{equation*}
\begin{pmatrix}
 2n-2 & 0 & d \\
 0 & -2 & 1 \\ 
 d & 1 & 0
\end{pmatrix}
\cong \langle -2\cdot(d^2 - n +1) \rangle \oplus U \hookrightarrow \text{NS}(X).
\end{equation*}
given by the intersection matrix with respect to the basis $\langle \O_{\PP^n}(1)|_X, C_p, E \rangle$. 
We set $k'=d^2-n+1$. Hence, we have an incidence variety 
$$
I' := \{(X,E)\ |\ (E,p)\in H^1_{d,1,n} \text{ and } E\subset X \in \cM_{2k'} \} \subset \cM_{2k'}\times H^1_{d,1,n}
$$
and the natural projections, denoted by $\pi_1'$ and $\pi_2'$. The fiber $\pi_2'^{-1}(E,p)$ is unirational. Indeed, to obtain a K3 surface that is nodal in a point, one has to solve linear equations in the coefficients of the equations generating the K3 surface (as well as their derivatives). But the choice is unirational, and therefore the incidence variety $I'$ is unirational. The unirationality of $\cM_{2k'}$ follows by the construction of an example with the desired properties and a dimension count, as in Case 1 and 2 (see also Section \ref{sec:generalstrategy} for details). The following table lists our experimental data for such nodal K3 surfaces. We denote by $\dim \pi_2'^{-1}(E,p)$ the dimension of nodal K3 surfaces containing $E$ and having a node at $p\in E$.

\begin{table}[H]
\centering
 \begin{tabular}{|c|c|c|c|c|}
  \hline
  $k'$ & $n$ & $d$ & $\dim(\text{Hilb}_{d\cdot t}(\PP^n))$ &  $\dim \pi_2'^{-1}(E,p)$ \\ \hline \hline 
  $6$ & $4$ & $3$ & $15$ & $24$ \\ \hline
  $7$ & $3$ & $3$ & $12$ & $20$ \\ \hline
  $12$ & $5$ & $4$ & $24$ & $28$ \\ \hline
  $13$ & $4$ & $4$ & $20$ & $19$ \\ \hline
  $14$ & $3$ & $4$ & $16$ & $18$ \\ \hline
  $21$ & $5$ & $5$ & $30$ & $22$ \\ \hline
  $22$ & $4$ & $5$ & $25$ & $14$ \\ \hline
  $23$ & $3$ & $5$ & $20$ & $12$ \\ \hline
  $32$ & $5$ & $6$ & $36$ & $16$ \\ \hline
  $33$ & $4$ & $6$ & $30$ & $9$ \\ \hline
  $34$ & $3$ & $6$ & $24$ & $8$ \\ \hline
  $45$ & $5$ & $7$ & $42$ & $10$ \\ \hline
  $47$ & $3$ & $7$ & $28$ & $4$ \\ \hline
  $60$ & $5$ & $8$ & $48$ & $4$ \\ \hline
  $62$ & $3$ & $8$ & $32$ & $0$ \\ \hline
 \end{tabular}
  \caption{List of values for nodal K3 surfaces}
 \label{table:nodal}
\end{table} 

\begin{remark}
By studying nodal K3 surfaces containing a smooth rational curve with an $A_1$-singularity we can show the unirationality of $\cM_{2k}$ for the following values of $k$: 
$$
k\in \{13, 17, 21, 25, 31, 37, 41, 61\}.
$$
But we have already shown the unirationality of $\cM_{2k}$ for these values of $k$ before (see Table \ref{table:ratrat}). Therefore, we do not treat these cases in detail. 
\end{remark}

We end this section with three examples demonstrating how to choose a nodal K3 surface in $\PP^n$ for $n= 3,4$ and $5$ containing a given curve and having an $A_1$-singularity at a point. Furthermore, we recall the dimension count in these examples showing that the projection $\pi_1':I' \dashrightarrow \cM_{2k'}$ is dominant.

\begin{example}{Nodal quartic surfaces: $k'=7$.}
Given an elliptic curve $E$ together with a point $p$ in $\PP^3$, a nodal K3 surface containing $E$ and with a node at $p$ is a quartic generator of the ideal $\mathcal{I}_{Ep^2}:= \mathcal{I}_E\cap (\mathcal{I}_{p})^2$.

Let $X'\subset \PP^3$ be a nodal quartic surface containing an elliptic curve $E$ of degree $3$ with an $A_1$-singularity at a point of $E$. Then the desingularization $X$ of $X'$ is a smooth K3 surface with the following primitive lattice embedding
$$
\begin{pmatrix}
 4 & 0 & 3 \\
 0 & -2 & 1 \\ 
 3 & 1 & 0
\end{pmatrix}
\cong \langle -14 \rangle \oplus U \hookrightarrow \text{NS}(X).
$$
We recall the dimension count in this case:
\begin{align*}
& \dim(\text{Hilb}_{3\cdot t}(\PP^3)) + 1 + (h^0(\mathcal{I}_{Ep^2}(4))-1) - \dim \text{PGL}(4)= \\
& = 12 + 1 + (21-1) - (4^2-1) = 18  = \dim \cM_{2k'} + 1 .
\end{align*}
\end{example}

\begin{example}{Nodal complete intersections of a quadric and a cubic: $k' = 6$.}
Given an elliptic curve $E$ together with a point $p$, we get a nodal K3 surface by choosing two generators of degree $2$ and $3$ in the ideal $\mathcal{I}_E$ with the same tangent space at the point $p$. Therefore, we compute all quadric and cubic hypersurfaces containing $E$ and being tangent at $p$ to a $\PP^3$ that contains the tangent line of $E$ at $p$. The ideal of such hypersurfaces, denoted $\mathcal{I}_{E,\PP^3}$, is the intersection of $\mathcal{I}_E$ and $\mathcal{I}^2_{p}\cap \mathcal{I}_{\PP^3}$. 

Let $X'\subset \PP^4$ be a nodal K3 surface containing an elliptic curve $E$ of degree $3$ with an $A_1$-singularity at a point of $E$. Then the desingularization $X$ of $X'$ is a smooth K3 surface with the following primitive lattice embedding
$$
\begin{pmatrix}
 6 & 0 & 3 \\
 0 & -2 & 1 \\ 
 3 & 1 & 0
\end{pmatrix}
\cong \langle -12 \rangle \oplus U \hookrightarrow \text{NS}(X).
$$
We recall the dimension count in this case (cf. equation (\ref{dimK3throughCurves})):
\begin{align*}
& \dim(\text{Hilb}_{3\cdot t}(\PP^4)) + 1 +  \dim(\PP^3 \text{ containing the tangent line }  T_p(E)) \\
 & + (h^0(\mathcal{I}_{E,\PP^3}(2)) - 1 + h^0(\mathcal{I}_{E,\PP^3}(3)) -6) - \dim \text{PGL}(5)= \\
& = 15 + 1 + 2 + (7 + 24 - 7) - (5^2-1) = 18  = \dim \cM_{2k'} + 1 .
\end{align*} 

\end{example}

\begin{example}{Nodal complete intersections of three quadrics: $k'= 12$.}
Given an elliptic curve $E$ together with a point $p$, we obtain a nodal K3 surface by choosing a nodal quadric in the ideal $\mathcal{I}_{Ep^2}:= \mathcal{I}_E\cap (\mathcal{I}_{p})^2$ and two further quadrics in the ideal of $E$.

Let $X'\subset \PP^5$ be a nodal K3 surface containing an elliptic curve $E$ of degree $4$ with an $A_1$-singularity at a point of $E$. Then the desingularization $X$ of $X'$ is a smooth K3 surface with the following primitive lattice embedding
$$
\begin{pmatrix}
 8 & 0 & 4 \\
 0 & -2 & 1 \\ 
 4 & 1 & 0
\end{pmatrix}
\cong \langle -24 \rangle \oplus U \hookrightarrow \text{NS}(X).
$$
We recall the dimension count in this case (cf. equation (\ref{dimK3throughCurves}), noticing that the $-3$ in the first line comes from the projectivization of $H^0(\mathcal{I}_E(2)))$ and from the fact that the last two quadrics can be chosen up to multiples of the first quadric):
\begin{align*}
& \dim(\text{Hilb}_{4\cdot t}(\PP^5)) + 1 + (h^0(\mathcal{I}_{Ep^2}(2))-1) + (2\cdot(h^0(\mathcal{I}_E(2)) - 3)) - \dim \text{PGL}(6)= \\
& = 24 + 1 + (9-1) + (2\cdot(13 - 3)) - (6^2-1) = 18  = \dim \cM_{2k'} + 1 .
\end{align*}

\end{example}

\bibliographystyle{amsalpha}
\bibliography{bibliography}

\end{document}